\documentclass[12pt]{article}
\usepackage[numbers,sort&compress]{natbib}

\usepackage{threeparttable}
\usepackage[english]{babel}
\usepackage{diagbox}
\usepackage{t1enc}
\usepackage{amsthm,amsmath,amssymb}
\usepackage[mathscr]{eucal}
\usepackage{latexsym}
\usepackage{graphicx}
\usepackage{booktabs}
\usepackage{multirow}

\newtheorem{theorem}{Theorem}

\newtheorem{proposition}[theorem]{Proposition}
\newtheorem{lemma}[theorem]{Lemma}

\newtheorem{conjecture}[theorem]{Conjecture}

\newtheorem{definition}[theorem]{Definition}

\DeclareMathOperator{\rank}{rank}
\DeclareMathOperator{\diag}{diag}
\DeclareMathOperator{\lcm}{lcm}
\DeclareMathOperator{\RO}{RO}
\title{On the levels of rational regular orthogonal matrices for generalized cospectral graphs}
\author
{
Wei Wang\thanks{Corresponding author: wangwei.math@gmail.com} \quad\quad Jiaojiao Luo\quad\quad Li Wang\\
{\footnotesize School of Mathematics and Physics, Anhui Polytechnic University, Wuhu 241000, P. R. China}
\date{}
}
\begin{document}
 \maketitle
\begin{abstract} For an $n$-vertex graph $G$ with adjacency matrix $A$, the walk matrix $W(G)$ of $G$ is the matrix $[e,Ae,\ldots,A^{n-1}e]$, where  $e$ is the all-ones vector. Suppose that $W(G)$ is nonsingular and $p$ is an odd prime such that $W(G)$ has rank $n-1$ over the finite field $\mathbb{Z}/p\mathbb{Z}$. Let $H$ be a graph that is  generalized cospectral with $G$, and $Q$ be the corresponding rational regular orthogonal  matrix satisfying $Q^\mathsf{T} A(G) Q=A(H)$. We prove that 
	\begin{equation*}
		v_p(\ell(Q))\le \frac{1}{2}v_p (\det W(G))
	\end{equation*}
	where $\ell(Q)$ is the minimum positive integer $k$ such that $kQ$ is an integral matrix, and $v_p(m)$ is the maximum nonnegative integer $s$ such that $p^s$ divides $m$. This significantly  improves upon a recent result of Qiu et al. [Discrete Math. 346 (2023) 113177] stating that 
	$v_p(\ell(Q))\le v_p (\det W(G))-1.$
	
	\vspace{1em}

\noindent\textbf{Keywords:}  walk matrix; Smith normal form; rational orthogonal matrix; generalized cospectral; $p$-adic valuation\\

\noindent\textbf{Mathematics Subject Classification:} 05C50 
\end{abstract}
\section{Introduction}
For a graph $G$, the \emph{spectrum} of $G$ is the multiset of all eigenvalues of its adjacency matrix. Two graphs are \emph{cospectral} if they share the same spectrum.   We say two graphs $G$ and $H$ are \emph{generalized cospectral} if (i) $G$ is cospectral with $H$ and (ii) $\overline{G}$ is cospectral with $\overline{H}$, where $\overline{G}$ denotes the complement of $G$. A graph $G$ is \emph{determined by its generalized spectrum} (DGS)  if any graph generalized cospectral with $G$ is isomorphic to $G$. An orthogonal matrix $Q$ is \emph{regular} if the sum of each row is 1.  A classic result of Johnson and Newman \cite{johnson} states that two graphs $G$ and $H$ are generalized cospectral if and only if their adjacency matrices are similar via a regular orthogonal matrix. 

For an $n$-vertex graph $G$ with adjacency matrix $A$, the \emph{walk matrix} of $G$ is \begin{equation*}
W(G):=[e,Ae,\ldots,A^{n-1}e],
\end{equation*} where $e$ is the all-ones vector. A graph is \emph{controllable} \cite{godsil} if $W(G)$ is nonsingular. We remark that the set of controllable graphs is \emph{closed} under generalized cospectrality, that is, any graph generalized cospectral with a controllable graph must be controllable. A key observation of Wang \cite{wang2006} is that, when restricted to controllable graphs,   the regular orthogonal matrix connecting the adjacency matrices of two generalized cospectral graphs $G$ and $H$ is unique and rational. 

We are mainly concerned with controllable graphs in this paper. We use $\RO_n(\mathbb{Q})$ to denote the group of rational regular orthogonal matrices of order $n$. For a controllable graph $G$ with adjacency matrix $A$, we write
\begin{equation*}
	\mathcal{Q}(G)=\{Q\in \RO_n(\mathbb{Q})\colon\, Q^\mathsf{T} AQ \text{~is a $(0,1)$-matrix} \}.
	\end{equation*}
It is easy to show that any $(0,1)$-matrix of the form $Q^\mathsf{T} A Q$ must be the adjacency matrix of some graph (see \cite[Remark 1]{qiu2023}). Let $\mathcal{C}(G)$ be the set of all graphs that are generalized cospectral with $G$. As there exists a one-to-one correspondence between $\mathcal{C}(G)$ and $\mathcal{Q}(G)$, the problem of determining the set $\mathcal{C}(G)$ can naturally be  reduced to that of determining the set $\mathcal{Q}(G)$. For example, $\mathcal{Q}(G)$ always contains all permutation matrices of order $n$ which  correspond to graphs that are isomorphic to $G$.  Moreover, if $\mathcal{Q}(G)$ contains a non-permutation matrix, then  $\mathcal{C}(G)$ contains a graph that is not isomorphic to $G$, i.e., $G$ is not DGS.  The following notion introduced in \cite{wang2006} is crucial to investigate the set $\mathcal{Q}(G)$.
\begin{definition}[\cite{wang2006}] \normalfont
 Let $Q$ be a rational matrix. The \emph{level} of $Q$, denoted by $\ell(Q)$, is the
smallest positive integer $k$ such that $kQ$ is an integral matrix.
\end{definition}
Let 
\begin{equation*}
L(G)=\lcm\{\ell(Q)\colon\,Q\in \mathcal{Q}(G)\},
\end{equation*}the least common multiple of all levels  $\ell(Q)$ for $Q\in \mathcal{Q}(G)$. Clearly, if $L(G)=1$ then $\mathcal{Q}(G)$ contains only permutation matrices which implies that $G$ is DGS.   A theorem of Wang \cite{wang2013EJC,wang2017JCTB} states that if the integer $2^{-\lfloor\frac{n}{2}\rfloor}\det W(G)$ is  odd and square-free, then   $L(G)=1$ and hence $G$ is DGS. Wang proves this theorem by excluding the possibilities of $p\mid L(G)$ for odd primes $p$ and for $p=2$ separately.
\begin{proposition}[\cite{wang2013EJC}]\label{oddcase}
	If $p$ is an odd prime with $p^2\nmid \det W(G)$ then $p\nmid L(G)$.
\end{proposition}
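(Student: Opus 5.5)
We argue by contradiction. Suppose $p$ is odd, $p^2\nmid\det W(G)$, and $p\mid \ell(Q)$ for some $Q\in\mathcal{Q}(G)$. Let $H$ be the graph with $A(H)=Q^\mathsf{T} A Q$. Since $Q$ is regular, $Q^\mathsf{T} e=e$, and so $Q^\mathsf{T} A^k e=A(H)^k e$ for every $k$. Hence $Q^\mathsf{T} W(G)=W(H)$. Because $G$ is controllable, $Q^\mathsf{T}=W(H)W(G)^{-1}$, and therefore $\ell(Q)\,W(G)^{-1}$ has an integral multiple structure we can control. Concretely, write $\ell=\ell(Q)$. The matrix $\hat Q=\ell Q$ is integral and $\hat Q \not\equiv 0\pmod p$. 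It satisfies
\begin{equation*}
\hat Q^\mathsf{T}\hat Q=\ell^2 I \quad\text{and}\quad \hat Q^\mathsf{T} W(G)=\ell W(H)\equiv 0 \pmod p.
\end{equation*}
So some nonzero column $u$ of $\hat Q$ modulo $p$ satisfies $u^\mathsf{T} W(G)\equiv 0$ and $u^\mathsf{T} u\equiv 0 \pmod p$.

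\textbf{Step 1 (rank).} Since $p\mid\det W(G)$ but $p^2\nmid \det W(G)$, the Smith normal form of $W(G)$ has exactly one invariant factor divisible by $p$, and it is divisible by $p$ exactly once. Thus $\rank_p W(G)=n-1$, and the left kernel of $W(G)$ over $\mathbb{F}_p$ is spanned by a single vector $z$. Every column of $\hat Q$ is congruent modulo $p$ to a multiple of $z$, and $z^\mathsf{T} z\equiv 0\pmod p$.

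\textbf{Step 2 (lifting).} We show that the single factor of $p$ prevents $z$ from being isotropic in a liftable way. Choose an integral unimodular $U$ with $UW(G)=\diag(\ldots)$-type form, that is, $UW(G)V=S$ in Smith form. Take $z$ to be the row of $U$ corresponding to the $p$-invariant factor. Then $z^\mathsf{T} W(G)\equiv 0\pmod p$ but $z^\mathsf{T} W(G)\not\equiv 0\pmod{p^2}$; more precisely, $z^\mathsf{T} W(G)/p$ is a nonzero vector modulo $p$. Because $A$ is symmetric and $W(G)$ has a Krylov structure, we use the identity
\begin{equation*}
W(G)^\mathsf{T} W(G)=\bigl(e^\mathsf{T} A^{i+j}e\bigr)_{i,j},
\end{equation*}
a Hankel matrix. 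Since $z^\mathsf{T} A^k e\equiv 0$ for $k<n$, the relation $A z\equiv \lambda z\pmod p$ follows. This holds because the kernel is one-dimensional and $A$-invariant: if $z^\mathsf{T} W\equiv 0$, then $(Az)^\mathsf{T} A^k e\equiv 0$ for $k\le n-2$, and via Cayley--Hamilton also for $k=n-1$. So $z$ is an eigenvector modulo $p$.

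\textbf{Step 3 (the main obstacle).} We must show that $z^\mathsf{T} z\not\equiv 0\pmod p$, which contradicts Step 1. The plan is to apply the same relation to $H$. Write $\hat Q=z y^\mathsf{T}+p R$ with $y\not\equiv 0$, and use $\hat Q^\mathsf{T} W(G)=\ell W(H)$. Dividing by $p$ and reducing gives
\begin{equation*}
y\,(z^\mathsf{T} W(G)/p)+R^\mathsf{T} W(G)\equiv (\ell/p)W(H) \pmod p.
\end{equation*}
Combining this with $\hat Q\hat Q^\mathsf{T}=\ell^2 I$, which yields $z z^\mathsf{T}(y^\mathsf{T} y)+p(\cdots)\equiv 0\pmod{p^2}$, we want to extract $z^\mathsf{T} z\equiv 0 \pmod{p^2}$-type information. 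This should contradict the fact that $p$ divides $\det W$ only once, via $\det(W^\mathsf{T} W)$ computed in a basis containing $z$. Because $z^\mathsf{T} z$ appears as a diagonal entry of the transformed Gram matrix $U W W^\mathsf{T} U^\mathsf{T}$, the exact $p$-valuation, namely $v_p(\det W^\mathsf{T} W)=2$, forces constraints on it. Making this valuation bookkeeping rigorous is the step I expect to be hardest. Oddness of $p$ enters when we divide by $2$ in completing the quadratic form $y^\mathsf{T} y$ and $z^\mathsf{T} z$ relations.
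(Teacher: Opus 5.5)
Your Steps 1 and 2 are correct and reproduce part of Lemma~\ref{fourcong} for $\tau=1$. However, Step 3 is the entire content of the proposition, and you leave it as a plan that cannot succeed as described. Step 1 already settles the case $z^\mathsf{T} z\not\equiv 0\pmod p$, so everything hinges on the isotropic case $z^\mathsf{T} z\equiv 0\pmod p$. There, the data you propose to use give no obstruction.

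\emph{The quadratic condition is free.} For odd $p$, any lift $z$ can be replaced by $z+pu$ with $2u^\mathsf{T} z\equiv -z^\mathsf{T} z/p\pmod p$. This gives $(z+pu)^\mathsf{T}(z+pu)\equiv 0\pmod{p^2}$ while keeping $W^\mathsf{T}(z+pu)\equiv 0\pmod p$. So ``$z^\mathsf{T} z\equiv 0\pmod{p^2}$-type information'' cannot contradict $v_p(\det W)=1$.

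\emph{The Gram-determinant bookkeeping does not involve $z^\mathsf{T} z$.} If $z^\mathsf{T}$ is a row of $U$, the corresponding diagonal entry of $UWW^\mathsf{T} U^\mathsf{T}$ is $z^\mathsf{T} WW^\mathsf{T} z$, not $z^\mathsf{T} z$.

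\emph{A key hypothesis is never used.} The only properties of $Q$ you invoke are $\hat Q^\mathsf{T}\hat Q=\ell^2 I$, regularity, and integrality of $W(H)=Q^\mathsf{T} W$. Suppose the kernel vector has an integral lift $z$ with $e^\mathsf{T} z=0$ and $z^\mathsf{T} z=2p$. Then the reflection $I-\frac1p zz^\mathsf{T}$ is a rational regular orthogonal matrix of level $p$ with all these properties. What excludes such matrices is that $Q^\mathsf{T} AQ$ must be integral, and your sketch never uses this.

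The missing ingredient is $z_0^\mathsf{T} Az_0\equiv 0\pmod{p^2}$ for a column $z_0\not\equiv 0\pmod p$ of $\hat Q$, which follows from $\hat Q^\mathsf{T} A\hat Q=\ell^2 A(H)$. Combined with $z_0^\mathsf{T} z_0\equiv 0\pmod{p^2}$, it gives $z_0^\mathsf{T}(A-\lambda_0I)z_0\equiv 0\pmod{p^2}$. Unlike $z_0^\mathsf{T} z_0\equiv0\pmod{p^2}$, this cannot be arranged by choosing lifts. Since $(A-\lambda_0I)z_0\equiv 0$ and $z_0^\mathsf{T} z_0\equiv 0\pmod p$, its value modulo $p^2$ is unchanged under $z_0\mapsto z_0+pu$ and $\lambda_0\mapsto\lambda_0+p\mu$.

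The contradiction to aim for is also different. It is not $z^\mathsf{T} z\not\equiv 0$, but a vector $x\equiv z_0\pmod p$ with $W^\mathsf{T} x\equiv 0\pmod{p^2}$. By Proposition~\ref{dnk}, this forces $p^2\mid d_n\mid\det W$.

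The paper does not reprove this proposition; it cites \cite{wang2013EJC}. It is, however, the case $\tau=1$ of the proof of Theorem~\ref{main}, since $v_p(\det W)=1$ forces $\rank_pW=n-1$. That proof runs as follows. Because $(A-\lambda_0I)z_0/p$ is orthogonal to $z_0$ modulo $p$, Lemma~\ref{lc} writes it as $(A-\lambda_0I)y+sz_0$ modulo $p$. Hence $(A-\lambda_0I)(z_0-py)\equiv spz_0\pmod{p^2}$. Lemma~\ref{wy} then gives $W^\mathsf{T}(z_0-py)\equiv e^\mathsf{T}(z_0-py)(1,\lambda_0,\ldots,\lambda_0^{n-1})^\mathsf{T}\pmod{p^2}$. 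The vector $z_1$ of Lemma~\ref{ez}, with $e^\mathsf{T} z_1\not\equiv 0\pmod p$, cancels the right-hand side. The result is $W^\mathsf{T}(z_0-py-gpz_1)\equiv 0\pmod{p^2}$ with $z_0-py-gpz_1\equiv z_0\not\equiv0\pmod p$.

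Your proposal lacks this second-order correction. It also lacks the analysis of $A-\lambda_0 I$ modulo $p$ (Lemmas~\ref{rA}--\ref{ez}) that makes the correction possible.
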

\begin{proposition}[\cite{wang2017JCTB}]\label{evencase}
	If $2^{-\lfloor\frac{n}{2}\rfloor}$ is odd then $L(G)$ is odd.
\end{proposition}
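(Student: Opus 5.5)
We read the hypothesis as the intended one: the integer $2^{-\lfloor\frac{n}{2}\rfloor}\det W(G)$ is odd. The plan is to argue by contradiction: assume some $Q\in\mathcal{Q}(G)$ has even level $\ell=\ell(Q)$, and derive a nonzero vector over $\mathbb{F}_2$ satisfying constraints that the hypothesis forbids.

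\emph{Step 1: structure of $W$ over $\mathbb{F}_2$.} Since $A$ is symmetric with zero diagonal, $x^\mathsf{T}Ax\equiv 0\pmod 2$ for every integral $x$. From this one gets that $W^\mathsf{T}W\equiv (e^\mathsf{T}A^{i+j}e)$ is, modulo $2$, essentially determined by its first column. The consequence is the classical fact that $\rank_2 W(G)\le\lceil n/2\rceil$, i.e.\ $2^{\lfloor n/2\rfloor}\mid\det W$. Under our hypothesis this is tight: the Smith normal form of $W$ is $\diag(1,\ldots,1,2,\ldots,2,2b)$, with $\lceil n/2\rceil$ ones and $b$ odd. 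Hence $\ker_{\mathbb{F}_2}W^\mathsf{T}$ has dimension exactly $\lfloor n/2\rfloor$. Moreover, no $z\not\equiv0\pmod 2$ satisfies $W^\mathsf{T}z\equiv 0\pmod 4$ unless $z$ lies in the one-dimensional part coming from the last invariant factor. Even then, only $W^\mathsf{T}z\equiv0\pmod 4$ is possible, not modulo $8$.

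\emph{Step 2: extracting a bad vector from $Q$.} From $Q^\mathsf{T}A Q=A(H)$ and $Qe=e$ we get $Q^\mathsf{T}W(G)=W(H)$. Thus $\ell Q^\mathsf{T}W(G)=\ell W(H)\equiv 0\pmod{\ell}$, and in particular $\pmod 2$. Choose a column $u$ of $\ell Q$ with $u\not\equiv 0\pmod 2$; such a column exists by minimality of $\ell$. Then:
\begin{itemize}
\item $u^\mathsf{T}u=\ell^2\equiv 0\pmod 4$;
\item $u^\mathsf{T}e=\ell\equiv0\pmod 2$;
\item $u^\mathsf{T}W\equiv 0\pmod 2$;
\item $u^\mathsf{T}A^k u\equiv 0\pmod 4$ for all $k$, since $u^\mathsf{T}A^ku=\ell^2 (A(H)^k)_{jj}$.
\end{itemize}
The same holds for the whole $\mathbb{F}_2$-span $U$ of the columns of $\ell Q$ modulo $2$. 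Orthogonality gives $(\ell Q)^\mathsf{T}(\ell Q)=\ell^2 I$, so $U$ is totally isotropic, even modulo $4$ in the appropriate sense.

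\emph{Step 3: contradiction.} The aim is to show that such $u$ forces $W^\mathsf{T}u\equiv0\pmod 4$, or even to a higher power. The idea is to combine $u^\mathsf{T}A^ku\equiv 0\pmod 4$ with the identity $(u^\mathsf{T}A^ke)^2\equiv u^\mathsf{T}A^{2k}u\cdot(\text{something})$ modulo $2$-powers. More concretely, use $e^\mathsf{T}A^k u\equiv$ a sum over closed-walk parity $\pmod 4$, exploiting $x^\mathsf{T}Ax\equiv0$ again. Then Step 1 pins $u$ modulo $2$ into the one-dimensional exceptional subspace. Using a second column, or the rank of $U$, together with the $\pmod 8$ information coming from $\ell^2$ when $4\mid\ell$ (or a direct computation when $\ell\equiv2\pmod4$), gives the contradiction $b$ even.

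The main obstacle is Step 3: lifting the mod-$2$ annihilation $u^\mathsf{T}W\equiv0$ to a mod-$4$ statement. This is where the quadratic information $u^\mathsf{T}A^ku\equiv0\pmod4$ must be converted into linear information about $u^\mathsf{T}A^ke$. I would first try working with the symmetric matrix $W^\mathsf{T}W$ and the vector $W^\mathsf{T}u$ modulo $4$, using the explicit form of $W$'s kernel mod $2$.
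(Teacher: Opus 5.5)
\paragraph{Verdict: genuine gap in Step~3.}
The paper does not prove this statement. It is quoted from \cite{wang2017JCTB}, and you are right that the factor $\det W(G)$ was dropped from the hypothesis. So your proposal has to stand on its own, and it does not. Step~3 carries the entire content of the proposition but contains no argument: the ``identity'' with an unspecified factor is not an identity, and ``$b$ even'' is asserted, not derived.

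\paragraph{Why the sketched endgame cannot work.}
The endgame relies on objects that are not there.
\begin{itemize}
\item In Step~1 you posit a line of vectors $z\not\equiv 0\pmod 2$ with $W^{\mathsf T}z\equiv 0\pmod 4$. But your own Smith form has $v_2(d_n)=1$, so by the $p=2$ version of Proposition~\ref{dnk} no such $z$ exists at all.
\item Since $d_nW^{-1}$ is integral and $Q^{\mathsf T}=W(H)W^{-1}$, we have $\ell(Q)\mid d_n$. Hence an even level satisfies $\ell\equiv 2\pmod 4$, and the case $4\mid\ell$ never occurs.
\item For a column $u$ of $\ell Q$, $W^{\mathsf T}u$ is $\ell$ times a row of $W(H)$, so its first entry is $e^{\mathsf T}u=\ell$. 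Thus $W^{\mathsf T}u\equiv 0\pmod 4$ is equivalent to $4\mid\ell$: your Step~3 target restates the goal rather than reducing it.
\item Combinations do not help either. $W^{\mathsf T}(\ell Qx)=\ell\,W(H)^{\mathsf T}x$ vanishes mod $4$ only if $W(H)^{\mathsf T}x\equiv 0\pmod 2$. Then $\ell Qx\equiv 0\pmod 2$ automatically, because $W(H)^{\mathsf T}(\ell Q^{\mathsf T})=\ell W(G)^{\mathsf T}$ puts the columns of $\ell Q^{\mathsf T}$ mod $2$ into the kernel of $W(H)^{\mathsf T}$ over $\mathbb{Z}/2\mathbb{Z}$, which is totally isotropic.
\end{itemize}
The odd-prime template (Lemma~\ref{fourcong}) works because the mod-$p$ kernel of $W^{\mathsf T}$ is a single line. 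Mod $2$ this kernel has dimension $\lfloor n/2\rfloor$, so lifting congruences for vectors built linearly from $\ell Q$ cannot produce the contradiction.

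\paragraph{The missing ingredient.}
What is needed is a global statement. One that works is to show that $W(G)$ and $W(H)$ have the same kernel over $\mathbb{Z}/2\mathbb{Z}$.
\begin{itemize}
\item \emph{Even $n$.} The mod-$2$ column space $Z$ of $W$ is $A$-invariant and cyclic with generator $e$. It satisfies $Z=Z^{\perp}$, because $W^{\mathsf T}W\equiv 0\pmod 2$ and $\dim Z=n/2$. As $A$ is symmetric, it acts on $(\mathbb{Z}/2\mathbb{Z})^n/Z\cong Z^{*}$ as the transpose of $A|_Z$. Hence $\det(tI-A)\equiv\mu(t)^2\pmod 2$, where $\mu$ is the mod-$2$ minimal polynomial of $e$ with respect to $A$.
\item \emph{Odd $n$.} Here $Z^{\perp}\subset Z$ has codimension one and $Ae\in Z^{\perp}$. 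Hence $\det(tI-A)\equiv t\,\chi(t)^2$ with $\chi$ the characteristic polynomial of $A|_{Z^{\perp}}$, and $\mu=t\chi$.
\item \emph{Equal kernels.} Since $|\det W(H)|=|\det W(G)|$, $H$ satisfies the same rank condition. Square roots in $(\mathbb{Z}/2\mathbb{Z})[t]$ are unique, so cospectrality forces $\mu_G=\mu_H$. The mod-$2$ kernel of $W$ consists of the coefficient vectors of multiples of $\mu$ of degree less than $n$, so the two kernels coincide.
\item \emph{Conclusion.} $Y=2W(G)^{-1}$ has odd denominators because $v_2(d_n)=1$. The relation $W(G)Y=2I$ puts $Y$ mod $2$ inside the common kernel. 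Hence $Q^{\mathsf T}=\tfrac12 W(H)Y$ has odd denominators, i.e.\ $\ell(Q)$ is odd.
\end{itemize}
This route never uses the quadratic congruences you collected in Step~2.
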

Recently, Qiu et al.~\cite{qiu2023} improved both Proposition \ref{oddcase} and Proposition \ref{evencase} using a new and much  simpler argument. The current paper is a further improvement of the result in \cite{qiu2023}, but only for the odd prime case.  The result of  Qiu et al.~\cite{qiu2023} for the odd prime case is the following.
\begin{proposition}[\cite{qiu2023}]\label{qiuodd} If $p$ is an odd prime with $\rank_p W(G)=n-1$ then $L(G)\mid \frac{\det W(G)}{p}$.
\end{proposition}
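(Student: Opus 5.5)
The plan is to reduce the divisibility $L(G)\mid \frac{\det W(G)}{p}$ to a statement about one prime at a time. Fix $Q\in\mathcal{Q}(G)$ and let $H$ be the graph with $A(H)=Q^\mathsf{T} A Q$. Since $Q^\mathsf{T} e=e$ (because $Q$ is regular and orthogonal), we get $Q^\mathsf{T} A^j e=A(H)^j e$, hence $Q^\mathsf{T} W(G)=W(H)$. Therefore $Q^\mathsf{T}=W(H)W(G)^{-1}$. If $d_n$ denotes the last invariant factor in the Smith normal form $W(G)=U\,\diag(d_1,\ldots,d_n)\,V$, then $d_nW(G)^{-1}$ is integral, so $\ell(Q)\mid d_n$.

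For any prime $q\neq p$ this already gives $v_q(\ell(Q))\le v_q(d_n)\le v_q(\det W(G))=v_q(\det W(G)/p)$. For $q=p$, the hypothesis $\rank_p W(G)=n-1$ means $p\nmid d_1\cdots d_{n-1}$, so $v_p(d_n)=v_p(\det W(G))=:k$. If $k=0$ there is nothing to prove. So everything reduces to showing $v_p(\ell(Q))\neq k$ when $k\ge1$.

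Suppose for contradiction that $v_p(\ell(Q))=k$, and put $\hat Q=\ell(Q)Q$. This is integral and $\hat Q\not\equiv 0\pmod p$. From $\hat Q^\mathsf{T} W(G)=\ell(Q)W(H)$ every column $u$ of $\hat Q$ satisfies $u^\mathsf{T} W(G)\equiv 0\pmod{p^k}$. Using the Smith form, the solution module of $z^\mathsf{T} W(G)\equiv0\pmod{p^k}$ is cyclic: $z\equiv c\,\xi\pmod{p^k}$, where $\xi^\mathsf{T}$ is the last row of $U^{-1}$. Consequently $\hat Q\equiv \xi\eta^\mathsf{T}\pmod{p^k}$ for some integral $\eta\not\equiv0\pmod p$. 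Applying the same argument to $\hat Q^\mathsf{T}$ and $W(H)$ (via $QW(H)=W(G)$, and $\det W(H)=\pm\det W(G)$) shows that $\eta$ spans the analogous module for $H$.

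Next I would extract the orthogonality constraints. From $\hat Q^\mathsf{T}\hat Q=\ell(Q)^2I\equiv0\pmod{p^{2k}}$, choosing a column $u=c\xi+p^kw$ with $c$ a unit gives $\xi^\mathsf{T}\xi\equiv0\pmod{p^k}$. Similarly $\eta^\mathsf{T}\eta\equiv0\pmod{p^k}$, and $\xi^\mathsf{T} e\equiv0$ follows from $\hat Q e=\ell(Q)e$. The space $\{z: z^\mathsf{T} W\equiv 0\}$ is $A$-invariant by Cayley--Hamilton, so $\xi$ is an approximate eigenvector of $A$ modulo $p^k$.

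The main obstacle is turning these congruences into a contradiction. My first attempt would be to compute $W(H)=\hat Q^\mathsf{T} W(G)/\ell(Q)$ modulo $p$ and show it has rank at most $1$ over $\mathbb{Z}/p\mathbb{Z}$, contradicting $\rank_p W(H)=\rank_p W(G)=n-1\ge2$. Writing $\xi^\mathsf{T} W(G)=p^k r^\mathsf{T}$ with $r\not\equiv0\pmod p$ (exactness of $v_p(d_n)=k$) handles the main term $\eta\xi^\mathsf{T} W(G)/\ell(Q)\equiv(\text{unit})\,\eta r^\mathsf{T}\pmod p$. However, the error term $p^kE^\mathsf{T} W(G)/\ell(Q)$ is not obviously of rank $\le1$ modulo $p$. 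To control it I would refine the congruence $\hat Q\equiv\xi\eta^\mathsf{T}$ to higher $p$-adic precision, using $\hat Q\hat Q^\mathsf{T}=\ell(Q)^2I$ together with the intertwining relation $A\hat Q=\hat QA(H)$. The aim is to show that $E$ modulo $p$ also lies in $\xi\,\mathbb{F}_p^{n\mathsf{T}}+\mathbb{F}_p^n\eta^\mathsf{T}$. Failing that, I would fall back on the $p$-adic argument in the style of Proposition~\ref{oddcase}: iterate on the exponent to show $\xi^\mathsf{T}\xi\equiv0\pmod{p^{k+1}}$, and then contradict the exactness of $v_p(d_n)=k$.
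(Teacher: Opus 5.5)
Your reduction is correct. $Q^\mathsf{T}=W(H)W(G)^{-1}$ gives $\ell(Q)\mid d_n$, which handles every prime $q\neq p$. Also, $\rank_p W=n-1$ forces $v_p(d_n)=v_p(\det W)=k\ge 1$, so everything hinges on excluding $v_p(\ell(Q))=k$. But that exclusion is the whole content of the proposition, and you leave it as ``the main obstacle'' without an argument.

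Your first route needs $\rank_p W(H)\ge 2$. You assert $\rank_p W(H)=n-1$, but this does not follow from $\det W(H)=\pm\det W(G)$: since $p\mid\ell(Q)$, the relation $W(H)=Q^\mathsf{T} W(G)$ need not preserve the Smith form at $p$. The same unproved assumption underlies ``$\eta$ spans the analogous module for $H$''. You also concede that the error term $p^kE^\mathsf{T} W(G)/\ell(Q)$ is uncontrolled.

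The fallback has no mechanism. By Proposition~\ref{dnk}, exactness of $v_p(d_n)=k$ says that no $z\not\equiv 0\pmod p$ satisfies $W^\mathsf{T} z\equiv 0\pmod{p^{k+1}}$. A congruence on the norm $\xi^\mathsf{T}\xi$ gives no information about $W^\mathsf{T}\xi$ modulo $p^{k+1}$.

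Moreover, by replacing the columns of $\hat Q$ with their class $\xi$ modulo $p^k$, you throw away the decisive information. For an actual column $z_0$ of $\hat Q$, both $z_0^\mathsf{T} z_0$ and $z_0^\mathsf{T} Az_0$ vanish modulo $p^{2k}$, from $\hat Q^\mathsf{T}\hat Q=\ell(Q)^2I$ and $\hat Q^\mathsf{T} A\hat Q=\ell(Q)^2A(H)$. The condition involving $A$ never enters your argument.

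The missing idea is to use these quadratic conditions to lift $z_0$ to a vector annihilated by $W^\mathsf{T}$ to higher $p$-adic precision. The paper quotes this proposition from \cite{qiu2023} rather than reproving it, but its proof of Theorem~\ref{main} does exactly this with $\tau=v_p(\ell(Q))$:
\begin{itemize}
\item Since $z_0^\mathsf{T}(A-\lambda_0I)z_0\equiv 0\pmod{p^{2\tau}}$, the vector $(A-\lambda_0I)z_0/p^\tau$ is orthogonal to $z_0$ modulo $p^\tau$. By Lemma~\ref{lc} it equals $(A-\lambda_0I)y+sz_0$ modulo $p^\tau$.
\item Lemma~\ref{wy} then shows that $W^\mathsf{T}(z_0-p^\tau y)$ is congruent modulo $p^{2\tau}$ to a multiple of $(1,\lambda_0,\ldots,\lambda_0^{n-1})^\mathsf{T}$.
\item That multiple is cancelled by $gp^\tau z_1$, where $z_1$ comes from Lemma~\ref{ez} and $e^\mathsf{T} z_1$ is a unit.
\item The vector $z_0-p^\tau y-gp^\tau z_1\equiv z_0\not\equiv 0\pmod p$ is killed by $W^\mathsf{T}$ modulo $p^{2\tau}$. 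Hence $2\tau\le k$, and $k\ge 1$ gives $\tau\le k-1$.
\end{itemize}

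Lemmas~\ref{lc} and~\ref{ez} are where $\rank_p W=n-1$ is used beyond the cyclicity you already have, and nothing in your proposal plays their role. Together with your observation $\ell(Q)\mid d_n$ for $q\neq p$, they yield the proposition, and indeed the stronger bound $v_p(L(G))\le\frac12 v_p(\det W)$.
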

For a prime $p$ and a nonzero integer $m$, we use $v_p(m)$ to denote the \emph{$p$-adic valuation} \cite{gouvea} of $m$, that is, the maximum nonnegative integer $k$ such that $p^k$ divides $m$. Note that the conclusion of Proposition  \ref{qiuodd} can be rewritten as the inequality $v_p(L(G))\le v_p(\det W(G))-1$. The main aim of this paper is to give a better upper bound of $v_p(L(G))$ under the same assumption of Proposition \ref{qiuodd}.
\begin{theorem}\label{main}
	Let $G$ be a controllable graph and $p$ be an odd prime such that  $\rank_p W(G)=n-1$. Then $v_p(L(G))\le \frac{1}{2} v_p(\det W(G))$.
\end{theorem}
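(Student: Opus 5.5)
Fix $Q\in\mathcal{Q}(G)$ and let $H$ be the graph with $A(H)=Q^\mathsf{T}AQ$. Write $\ell=\ell(Q)$, $k=v_p(\ell)$, $d=v_p(\det W(G))$; I want to show $k\le d/2$.

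\textbf{Setup.} Since $Q$ is regular, $Q^\mathsf{T}e=e$, and hence $Q^\mathsf{T}W(G)=W(H)$. As $Q$ is orthogonal, $\det W(H)=\pm\det W(G)$. The rank condition is preserved: $\rank_p W(H)=n-1$. Indeed, $\ell Q^\mathsf{T}W(G)=\ell W(H)$ and $\ell Q W(H)=\ell W(G)$, so symmetry of the two graphs lets me reason from either side.

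\textbf{Step 1 (structure of $Q$ modulo $p$).} Write $\hat Q=\ell Q$, an integral matrix with $\hat Q^\mathsf{T}\hat Q=\ell^2 I$. From $\hat Q^\mathsf{T}W(G)=\ell W(H)\equiv 0\pmod{p^k}$, every column of $\hat Q$ lies in the $p^k$-annihilator of the row space of $W(G)$, taken modulo $p^k$.

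I pass to the Smith normal form of $W(G)$ over $\mathbb{Z}_p$. Because $\rank_p W(G)=n-1$, the SNF is $\diag(1,\dots,1,p^{d})$. So there is a single primitive integral vector $z$ with $z^\mathsf{T}W(G)\equiv 0\pmod{p^{d}}$, and the solutions of $x^\mathsf{T}W(G)\equiv 0\pmod{p^k}$ (for $k\le d$) are exactly $x\equiv cz\pmod{p^k}$. Hence $\hat Q\equiv z u^\mathsf{T}\pmod{p^k}$ for some integral vector $u$. The vector $u$ is nonzero mod $p$ by the minimality of $\ell$. This already gives Proposition~\ref{qiuodd}'s bound $k\le d$; the refinement must come from orthogonality.

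\textbf{Step 2 (use orthogonality).} The relation $\hat Q^\mathsf{T}\hat Q=\ell^2I\equiv 0\pmod{p^{2k}}$ forces $(z^\mathsf{T}z)\,uu^\mathsf{T}\equiv 0\pmod{p^k}$, so $z^\mathsf{T}z\equiv 0\pmod p$. The real task is to strengthen this to a statement about $z$ modulo $p^{\lceil d/2\rceil}$ or so. I propose to use the classical fact that $z$ can be taken with $z^\mathsf{T}A\equiv \lambda z^\mathsf{T}$ (an eigenvector mod $p^d$), together with $z^\mathsf{T}e\equiv0$. Then the symmetric bilinear pairing $z^\mathsf{T}A^iz$ and $W(G)^\mathsf{T}W(G)$ relate to $z^\mathsf{T}z$.

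The cleanest route I would try is to write $\hat Q=zu^\mathsf{T}+p^kR$ and expand $\hat Q^\mathsf{T}\hat Q=\ell^2 I$, giving
\[ (z^\mathsf{T}z)uu^\mathsf{T}+p^k(R^\mathsf{T}zu^\mathsf{T}+uz^\mathsf{T}R)+p^{2k}R^\mathsf{T}R\equiv 0 \pmod{p^{2k}}. \]
Applying the same analysis to $Q^\mathsf{T}$ (that is, to $H$, with vector $z'$) gives $\hat Q^\mathsf{T}\equiv z'u'^\mathsf{T}$, so $u\equiv c z'$. Combining the two should pin down the valuations.

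\textbf{Step 3 (valuation count; the main obstacle).} The intended conclusion is $2k\le d$. Heuristically, $\hat Q$ maps the $p$-primary cokernel of $W(H)$, which is cyclic of order $p^d$, into that of $W(G)$. The columns of $\hat Q$ have squared norm $\ell^2$, so $\hat Q$ must "use up" $p^{k}$ on both sides: once from $\hat Q^\mathsf{T}W(G)\equiv0 \pmod{p^k}$, and once from $W(G)=Q W(H)$, i.e.\ $\hat Q W(H)\equiv 0\pmod{p^k}$. Concretely, I would consider $\ell W(G)=\hat Q W(H)$ and $\ell W(H)=\hat Q^\mathsf{T}W(G)$, whence
\[ \ell^2 W(H)=\hat Q^\mathsf{T}\hat Q W(H). \]
More usefully, $W(G)^\mathsf{T}W(G)=W(H)^\mathsf{T}W(H)$. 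I would work in the SNF bases: if $W(G)=U\,\diag(1,\dots,1,p^d)\,V$ over $\mathbb{Z}_p$, then $U^{-1}\hat Q U'$ has its first $n-1$ columns and rows divisible by $p^k$, and is simultaneously governed by the corner entry. Its determinant $\pm\ell^n$ then gives a counting inequality. The obstacle is that a naive count only yields $k\le d$. Getting the factor $\tfrac12$ needs the corner entry, which is essentially $z^\mathsf{T}\hat Q z'$, to satisfy a divisibility. This divisibility should come from $z^\mathsf{T}z\equiv 0 \pmod{p^{d-?}}$, obtained via $z^\mathsf{T}W=0 \pmod{p^d}$ plus the symmetry of $W^\mathsf{T}W$, which is a Hankel matrix of walk counts. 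This is the step I expect to be hardest, and I would attack it by computing $z^\mathsf{T}z$ modulo powers of $p$ using the last column of $\mathrm{adj}\,W$.
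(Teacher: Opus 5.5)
Your proposal has a genuine gap. Step 3 is where the factor $\frac12$ has to come from, and it is a heuristic with an unspecified exponent $p^{d-?}$, not an argument. The route you sketch for it cannot work as stated:

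\begin{itemize}
\item A congruence for $z^\mathsf{T}z$ extracted from $z^\mathsf{T}W\equiv 0\pmod{p^d}$ and the Hankel symmetry of $W^\mathsf{T}W$ is a statement about $G$ alone. It involves neither $Q$ nor $k$.
\item A lower bound on $v_p(z^\mathsf{T}z)$ in terms of $d$ is a consequence of $d$ being large. It is not something that forces $d\ge 2k$.
\item Your SNF/determinant count uses only orthogonality of $Q$ and integrality of $W(H)=Q^\mathsf{T}W(G)$, and you note yourself that it gives only $k\le d$.
\end{itemize}

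Your plan also never uses that $Q^\mathsf{T}AQ$ is integral. In Step 2 you replace the columns of $\hat Q$ by the primitive generator $z$ and keep only $z^\mathsf{T}z\equiv 0\pmod p$. That discards exactly the information that matters. A column $z_0\not\equiv 0\pmod p$ of $\hat Q$ satisfies $z_0^\mathsf{T}z_0\equiv 0\pmod{p^{2k}}$ and $z_0^\mathsf{T}Az_0\equiv 0\pmod{p^{2k}}$; the latter holds because $\hat Q^\mathsf{T}A\hat Q=\ell^2A(H)$. It also satisfies $W^\mathsf{T}z_0\equiv 0\pmod{p^k}$ and $Az_0\equiv\lambda_0 z_0\pmod{p^k}$.

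The missing idea is to turn these congruences modulo $p^{2k}$ into a primitive solution of $W^\mathsf{T}x\equiv 0\pmod{p^{2k}}$. Proposition~\ref{dnk} then gives $p^{2k}\mid d_n$, hence $2k\le d$. The paper does this in four steps:
\begin{itemize}
\item The congruence $z_0^\mathsf{T}(A-\lambda_0I)z_0\equiv 0\pmod{p^{2k}}$ says that $p^{-k}(A-\lambda_0I)z_0$ is orthogonal to $z_0$ modulo $p^k$.
\item Using $\rank_pW=n-1$, one shows that $[A-\lambda_0I,\,z_0]$ has SNF $[\diag(I_{n-1},0),0]$ over $\mathbb{Z}/p^k\mathbb{Z}$. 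Hence every vector orthogonal to $z_0$ lies in its column module, and so $(A-\lambda_0I)(z_0-p^ky)\equiv sp^kz_0\pmod{p^{2k}}$ for some $y$ and $s$.
\item The walk-matrix structure then gives $W^\mathsf{T}(z_0-p^ky)\equiv e^\mathsf{T}(z_0-p^ky)\,(1,\lambda_0,\dots,\lambda_0^{n-1})^\mathsf{T}\pmod{p^{2k}}$.
\item The remaining obstruction is removed by subtracting $gp^kz_1$. Here $(A-\lambda_0I)z_1\equiv p^cz_0\pmod{p^k}$ and $e^\mathsf{T}z_1$ is a unit; the existence of such $z_1$ again needs the rank hypothesis and a case analysis on the SNF of $A-\lambda_0I$.
\end{itemize}
Nothing in your Steps 1--3 substitutes for this lifting.

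Two smaller points. Your assertion that $\rank_pW(H)=n-1$ is not justified by what you wrote, and Step 2 relies on it. Also, Step 1 yields only $k\le d$, not the bound $k\le d-1$ of Proposition~\ref{qiuodd}.
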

	The main innovation of this paper lies in the shift of the underlying algebraic structure from the vector space over the field $\mathbb{Z}/p\mathbb{Z}$ (as used in \cite{wang2013EJC,qiu2023}) to the module over the local ring $\mathbb{Z}/p^k\mathbb{Z}$. While working over a ring introduces the complexity of zero divisors, many basic results from linear algebra over fields still hold for the ring $\mathbb{Z}/p^k\mathbb{Z}$. In particular, it possesses the ``Basis Extension Property'' (Steinitz Property), which allows us to generalize arguments from linear algebra. 
	
	The rest of this paper is organized as follows.  In Sec.~\ref{pre}, we recall some  preliminaries on Smith normal forms and modules over $\mathbb{Z}/p^k \mathbb{Z}$.  The proof of Theorem \ref{main} is given in Sec.~\ref{pf}, which can be seen as a module-theoretical version of the original argument of Wang \cite{wang2006}, combined with some improvements and simplifications developed in \cite{wang2020}.   Some direct applications of Theorem \ref{main} are presented in Sec.~\ref{dis}. The paper concludes with a conjecture proposing a further possible improvement to Theorem \ref{main}.
\section{Preliminaries}
\label{pre}
Let $R$ be a commutative principal ideal ring with identity.  It is well known \cite{stanley2016JCTA} that any matrix $M$ over $R$ has a Smith normal form (SNF); that is, there exist invertible matrices $U$ and $V$ over $R$ such that 
\begin{equation*}
	UMV=\begin{bmatrix}
		\diag(d_1,\ldots,d_r)&0\\
		0&0
	\end{bmatrix},
\end{equation*}
where the elements $d_i$ ($i=1,\ldots,r$) are nonzero and satisfy $d_1\mid d_2\mid \cdots\mid d_r$. The nonzero elements $d_1,\ldots,d_r$ are called the \emph{invariant factors} of $M$.

Throughout this paper, we fix an odd prime $p$ and consider three kinds of rings: $\mathbb{Z}$, $\mathbb{Z}/p\mathbb{Z}$ and $\mathbb{Z}/p^k\mathbb{Z}$, where $k\ge 2$. Let $\mathbb{Z} \to \mathbb{Z}/p^k \mathbb{Z} \to \mathbb{Z}/p\mathbb{Z}$ be the natural projections. For a matrix $M$ over $\mathbb{Z}$, the SNFs of the projections of $M$ are determined naturally by the SNF of $M$ over $\mathbb{Z}$ via $p$-adic valuations. For example, suppose $M$ has the Smith normal form $\diag(2,10,30,270)$ over $\mathbb{Z}$. Then, over the field $\mathbb{Z}/3\mathbb{Z}$, the SNF of $M$ is $\diag(1,1,0,0)$, whereas over the local ring $\mathbb{Z}/3^2\mathbb{Z}$, its SNF is $\diag(1,1,3,0)$. 

In this paper, we are mainly concerned with the SNFs of matrices over the local ring $\mathbb{Z}/p^k \mathbb{Z}$. Note that the invariant factors are unique only up to multiplication by units. To ensure uniqueness, we conventionally assume that each nonzero invariant factor takes the form $p^{c}$ for some integer $0 \le c < k$. We formalize this in the following lemma.

\begin{lemma}
	For any matrix $M$ over $\mathbb{Z}/p^k\mathbb{Z}$, there exist invertible matrices $U$ and $V$ over $\mathbb{Z}/p^k\mathbb{Z}$ such that 
	\begin{equation*}
		UMV = \begin{bmatrix}
			\diag(p^{c_1}, \ldots, p^{c_r})&0\\
			0&0
		\end{bmatrix},
	\end{equation*}
	where $0 \le c_1 \le \cdots \le c_r < k$.
\end{lemma}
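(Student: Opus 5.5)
We prove the lemma directly by Gaussian elimination over the local ring $\mathbb{Z}/p^k\mathbb{Z}$, inducting on the size of $M$. The fact we use throughout is that every nonzero element $a$ of $\mathbb{Z}/p^k\mathbb{Z}$ can be written uniquely as $a=p^{c}u$ with $0\le c<k$ and $u$ a unit. To see this, lift $a$ to an integer $\tilde a$ with $p^k\nmid \tilde a$ and set $c=v_p(\tilde a)<k$. Then $\tilde a/p^c$ is prime to $p$, so its image $u$ is a unit. Consequently, for nonzero $a=p^{c}u$ and $b=p^{d}w$, we have $a\mid b$ if and only if $c\le d$. Divisibility among nonzero elements is therefore a total preorder governed by the exponent.

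Induct on the number of rows (or on $\min$ of the dimensions). If $M=0$, take $U=I$, $V=I$, and $r=0$. Otherwise, among all nonzero entries choose one, $m_{ij}=p^{c}u$, with the minimal exponent $c$. Permutation matrices are invertible, so we may move this entry to position $(1,1)$. Multiplying the first row by $u^{-1}$ is an invertible elementary operation and makes the $(1,1)$ entry exactly $p^{c}$.

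By minimality of $c$, every other entry $m_{i1}$ and $m_{1j}$ is divisible by $p^c$. So we can subtract suitable multiples of the first row and column to clear the rest of the first column and first row. These row and column additions are invertible operations. Every entry of the remaining block $M'$ is an integer combination of original entries, and each original entry is divisible by $p^c$. Hence every entry of $M'$ is still divisible by $p^c$, so any nonzero entry of $M'$ has exponent at least $c$.

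Now apply the induction hypothesis to $M'$. This gives invertible $U',V'$ with $U'M'V'=\diag(p^{c_2},\ldots,p^{c_r})\oplus 0$ and $c_2\le\cdots\le c_r<k$. Invertible operations preserve the ideal generated by the entries, so $p^{c}\mid p^{c_2}$ in $\mathbb{Z}/p^k\mathbb{Z}$. Since $p^{c_2}\neq 0$, this forces $c\le c_2$. Embedding $U'$ and $V'$ as $1\oplus U'$ and $1\oplus V'$ and composing them with the earlier operations gives the required $U$ and $V$, with $c_1=c$.

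There is no real obstacle here. The only point needing care is the unit-times-power decomposition, which is what allows the diagonal entries to be normalized to exact powers $p^{c_i}$ with $c_i<k$. Alternatively, one could cite the general SNF existence over principal ideal rings \cite{stanley2016JCTA}. Each resulting invariant factor $d_i$ is nonzero, so it equals $p^{c_i}$ times a unit, which can be absorbed into $U$. The chain $d_1\mid\cdots\mid d_r$ then translates into $c_1\le\cdots\le c_r$ by the divisibility criterion above.
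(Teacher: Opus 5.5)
Your proof is correct, and your main route differs from the paper's. The paper gives no separate proof of this lemma. It takes the existence of the Smith normal form over an arbitrary principal ideal ring (citing Stanley's survey), then writes each nonzero invariant factor as $p^{c_i}$ times a unit and absorbs the unit into $U$. That is exactly the alternative in your last paragraph.

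Your primary argument is instead a self-contained elimination that uses the local structure of $\mathbb{Z}/p^k\mathbb{Z}$. Divisibility among nonzero elements is governed by the exponent alone, so an entry of minimal exponent divides every entry of the matrix. A single pivot step therefore clears the first row and column, with no B\'ezout-type $2\times 2$ transformations and no termination argument. The normalization to exact powers $p^{c_i}$ and the chain $c_1\le\cdots\le c_r$ then fall out of the induction directly. Your ideal argument for $c\le c_2$ is sound, and only containment (not preservation) of the ideal is needed there.

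Your route buys independence from the general principal-ideal-ring theorem. That theorem is harder precisely because a pivot of minimal size need not divide the other entries. The paper's route buys brevity.

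One small correction: the decomposition $a=p^{c}u$ is not unique in $u$. For example, $3=3\cdot 4$ in $\mathbb{Z}/9\mathbb{Z}$. Only the exponent $c$ is determined by $a$, and that is all your argument actually uses.
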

A classic result in linear algebra states that the linear system $Mx=b$ over a field has a solution if and only if $\rank M = \rank (M,b)$. The corresponding question over an arbitrary ring is generally more complicated, but for our purposes, the existence of the SNF gives a straightforward extension.

\begin{proposition}\label{ssnf}
	The linear system $Mx=b$ over $\mathbb{Z}/p^k\mathbb{Z}$ has a solution if and only if $M$ and $(M,b)$ have the same invariant factors.
\end{proposition}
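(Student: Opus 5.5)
We reduce the solvability question to a comparison of minors, using the Smith normal form of $M$ from the preceding lemma. Write $UMV=D$ with $D=\left[\begin{smallmatrix}\diag(p^{c_1},\ldots,p^{c_r})&0\\0&0\end{smallmatrix}\right]$ and $U,V$ invertible over $\mathbb{Z}/p^k\mathbb{Z}$. Since $V$ is invertible, $Mx=b$ is solvable if and only if $Dy=Ub$ is solvable, via $y=V^{-1}x$. Also
\begin{equation*}
U(M,b)\begin{bmatrix}V&0\\0&1\end{bmatrix}=(D,Ub),
\end{equation*}
so $(M,b)$ and $(D,Ub)$ have the same invariant factors. We may therefore assume $M=D$ and replace $b$ by $b'=Ub$.

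For the forward direction, suppose $Dy=b'$ is solvable. Then $b'$ is a combination of the columns of $D$, so column operations reduce $(D,b')$ to $(D,0)$. Its SNF is that of $D$ with an extra zero column, and hence the invariant factors agree.

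For the converse, we use the invariance of determinantal ideals. The ideal $I_j(N)$ generated by the $j\times j$ minors of $N$ is unchanged under multiplication by invertible matrices, by Cauchy--Binet in both directions. By the lemma, $I_j(N)$ equals the ideal generated by the product of the first $j$ invariant factors, or $0$ when $j$ exceeds their number. Over the local ring $\mathbb{Z}/p^k\mathbb{Z}$, every ideal has the form $(p^s)$ with $0\le s\le k$, so the sequence of ideals $I_j$ determines the exponents $c_1,\ldots,c_r$. Our convention $c_i<k$ makes these exponents well defined, even though the products $p^{c_1+\cdots+c_j}$ could vanish modulo $p^k$.

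This is the delicate point. If $c_1+\cdots+c_j\ge k$ for some $j$, the ideal $I_j$ collapses to $0$ and no longer sees the individual $c_i$. So recovering the invariant factors from minors needs care. The cleaner route is the standard uniqueness argument via the isomorphism type of the cokernel:
\begin{equation*}
\operatorname{coker}N\cong\bigoplus_i \mathbb{Z}/p^{c_i}\mathbb{Z}\ \oplus\ (\mathbb{Z}/p^k\mathbb{Z})^{m-r},
\end{equation*}
where $m$ is the number of rows. This determines the $c_i$ and $r$, because the number of cyclic summands of each order is read off from the lengths of $p^t\operatorname{coker}N$. Hence equal invariant factors for $D$ and $(D,b')$ means $\operatorname{coker}D\cong\operatorname{coker}(D,b')$.

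Now $\operatorname{coker}(D,b')$ is the quotient of $\operatorname{coker}D$ by the cyclic submodule generated by the image $\bar b'$ of $b'$. A finite module is isomorphic to its quotient by a submodule only if that submodule is zero, by comparing cardinalities. So $\bar b'=0$, which means $b'$ lies in the column space of $D$ and $Dy=b'$ is solvable. The main obstacle is exactly this uniqueness and cardinality step. The finiteness of $\mathbb{Z}/p^k\mathbb{Z}$ makes it painless: $|\operatorname{coker}D|=|\operatorname{coker}(D,b')|\cdot|\langle\bar b'\rangle|$ forces $|\langle\bar b'\rangle|=1$.
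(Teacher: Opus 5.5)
Your proof is correct and is essentially the paper's argument in dual form. The paper observes that $\mathcal{C}(M)\subseteq\mathcal{C}(M,b)$ are isomorphic finite modules, since their isomorphism types are fixed by the invariant factors, and hence equal; you instead note that $\operatorname{coker}(M,b)$ is a quotient of $\operatorname{coker}M$ isomorphic to it, so $\bar b'=0$. The reduction to diagonal $D$ and the determinantal-ideal detour are unnecessary, and the ideals $I_j$ in fact cannot recover the $c_i$ over $\mathbb{Z}/p^k\mathbb{Z}$ (e.g.\ $\diag(p,p)$ and $\diag(p,0)$ modulo $p^2$ have the same $I_j$), but your final cokernel argument does not rely on them.
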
 

\begin{proof}
	The ``only if'' part is clear. Let $\mathcal{C}(M)$ and $\mathcal{C}(M,b)$ be the modules generated by the columns of $M$ and $(M,b)$, respectively. Since $M$ and $(M,b)$ have the same invariant factors, the two modules $\mathcal{C}(M)$ and $\mathcal{C}(M,b)$ are isomorphic. As $\mathcal{C}(M,b)$ is finite, it cannot be isomorphic to any proper submodule. Since $\mathcal{C}(M)$ is a submodule of $\mathcal{C}(M,b)$, the isomorphism $\mathcal{C}(M)\cong \mathcal{C}(M,b)$ implies that they must be equal. Thus, $b\in\mathcal{C}(M)$, i.e., $Mx=b$ has a solution. 
\end{proof}
The SNF is particularly useful for determining the structure of the solution space (kernel) of a linear system. It is important to note that the kernel is not always a free module. However, under specific conditions on the invariant factors, it is.

\begin{proposition}\label{kf}
	Let $M$ be an $m \times n$ matrix over the ring $\mathbb{Z}/p^k\mathbb{Z}$. Let the invariant factors of $M$ be $p^{c_1}, \ldots, p^{c_r}$ satisfying $0 \le c_1 \le \dots \le c_r < k$. Then the kernel of $M$ is given, up to isomorphism, by:
	\begin{equation*}
		\ker(M) \cong \bigoplus_{i=1}^r \left( \mathbb{Z}/p^{c_i}\mathbb{Z} \right) \oplus \left( \mathbb{Z}/p^k \mathbb{Z} \right)^{n-r}.
	\end{equation*}
	In particular, if $c_1=\cdots=c_r=0$, then $\ker(M)\cong (\mathbb{Z}/p^k\mathbb{Z})^{n-r}$, which is a free module of rank $n-r$.
\end{proposition}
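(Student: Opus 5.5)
The plan is to reduce to the Smith normal form and then compute the kernel directly. By the preceding lemma there are invertible matrices $U$ and $V$ over $\mathbb{Z}/p^k\mathbb{Z}$ with $UMV=D$, where
\begin{equation*}
D=\begin{bmatrix}\diag(p^{c_1},\ldots,p^{c_r})&0\\0&0\end{bmatrix}
\end{equation*}
is $m\times n$. Since $U$ is invertible, $Mx=0$ holds if and only if $UMx=0$. Substituting $x=Vy$ turns this into $Dy=0$, and $y\mapsto Vy$ is a module isomorphism of $(\mathbb{Z}/p^k\mathbb{Z})^n$ onto itself. Hence $\ker(M)=V\ker(D)\cong\ker(D)$, and it suffices to compute the kernel of the diagonal matrix $D$.

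Next I describe $\ker(D)$ coordinatewise. The system $Dy=0$ decouples into the conditions $p^{c_i}y_i=0$ for $i=1,\ldots,r$. The remaining coordinates $y_{r+1},\ldots,y_n$ are free, and the last $m-r$ rows of $D$ are zero, so they impose nothing. Thus
\begin{equation*}
\ker(D)=\bigoplus_{i=1}^r \mathrm{Ann}(p^{c_i})\oplus(\mathbb{Z}/p^k\mathbb{Z})^{n-r},
\end{equation*}
where $\mathrm{Ann}(p^{c})=\{y\in\mathbb{Z}/p^k\mathbb{Z}\colon p^{c}y=0\}$.

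The one step needing a short argument is identifying $\mathrm{Ann}(p^c)$ for $0\le c<k$. Lift $y$ to an integer. Then $p^c y\equiv 0\pmod{p^k}$ holds if and only if $p^{k-c}\mid y$. So $\mathrm{Ann}(p^c)=p^{k-c}\mathbb{Z}/p^k\mathbb{Z}$, which is cyclic of order $p^c$ and generated by $p^{k-c}$. The map $\mathbb{Z}/p^{c}\mathbb{Z}\to \mathrm{Ann}(p^c)$ given by $t\mapsto p^{k-c}t$ is well defined and bijective, so $\mathrm{Ann}(p^c)\cong\mathbb{Z}/p^c\mathbb{Z}$. In the case $c=0$ this is the zero module.

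Putting these together gives the claimed decomposition. If all $c_i=0$, the first $r$ summands vanish, so $\ker(M)\cong(\mathbb{Z}/p^k\mathbb{Z})^{n-r}$, which is free of rank $n-r$. There is no real obstacle here. The only point of care is that $V$ is invertible over the ring, so that conjugating by $V$ preserves the module structure of the kernel; this is exactly what the Smith normal form lemma guarantees.
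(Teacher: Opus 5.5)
Your proof is correct. The reduction $\ker(M)=V\ker(D)$ via the Smith normal form, followed by the coordinatewise computation $\{y\in\mathbb{Z}/p^k\mathbb{Z}\colon p^{c}y=0\}=p^{k-c}\mathbb{Z}/p^k\mathbb{Z}\cong\mathbb{Z}/p^{c}\mathbb{Z}$, is the standard argument. The paper states this proposition without proof, treating it as a known consequence of the Smith normal form, so your write-up supplies exactly the justification the paper implicitly relies on.
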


The following proposition is a direct generalization of \cite[Lemma 7]{wang2013EJC}, where the case $m=n,k=2$ was considered. The original proof in \cite{wang2013EJC} remains valid for the general setting and is therefore omitted here.

\begin{proposition}[\cite{wang2013EJC}]\label{dnk}
	Let $M$ be an $m\times n~(m\ge n)$ integral matrix whose SNF is 
	\begin{equation*}
		\begin{bmatrix}
		\diag(d_1,\ldots,d_n)\\
		0_{(m-n)\times n}
		\end{bmatrix}.
	\end{equation*} Then the equation $Mz\equiv 0\pmod{p^{k}}$ has a solution $z\not\equiv 0\pmod{p}$ if and only if $p^{k}\mid d_n.$
\end{proposition}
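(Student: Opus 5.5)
The statement to prove is Proposition \ref{dnk}: $Mz\equiv 0\pmod{p^k}$ has a solution $z\not\equiv 0\pmod p$ if and only if $p^k\mid d_n$. The plan is to reduce everything to the diagonal Smith form via unimodular changes of coordinates. These changes preserve both the congruence and the condition ``$z\not\equiv 0\pmod p$''.

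Write $UMV=D:=\begin{bmatrix}\diag(d_1,\ldots,d_n)\\ 0\end{bmatrix}$ with $U\in GL_m(\mathbb{Z})$ and $V\in GL_n(\mathbb{Z})$. Set $y=V^{-1}z$. Since $U$ is invertible over $\mathbb{Z}$, and hence modulo $p^k$, the congruence $Mz\equiv 0\pmod{p^k}$ is equivalent to $Dy\equiv 0\pmod{p^k}$. Since $V$ is invertible modulo $p$, we have $z\not\equiv 0\pmod p$ if and only if $y\not\equiv 0\pmod p$. So it suffices to prove the claim for $D$ itself. In that case the system reads $d_iy_i\equiv 0\pmod{p^k}$ for $i=1,\ldots,n$.

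For the ``if'' direction, suppose $p^k\mid d_n$. Take $y=e_n$, the last standard basis vector. Then $Dy=d_ne_n\equiv 0\pmod{p^k}$ and $y\not\equiv 0\pmod p$. Setting $z=Ve_n$, the last column of $V$, gives the required solution.

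For the ``only if'' direction, suppose $y$ is a solution with some coordinate $y_j\not\equiv 0\pmod p$. Then $y_j$ is a unit modulo $p^k$, so $d_jy_j\equiv 0\pmod{p^k}$ forces $p^k\mid d_j$. The divisibility chain $d_j\mid d_n$ then gives $p^k\mid d_n$. (If some $d_i=0$, the statement is read with $p^k\mid 0$, which is consistent with this argument.)

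There is no real obstacle. The only point that needs care is checking that the unimodular change of variables preserves the nonvanishing condition modulo $p$, and this holds because $\det V=\pm1$.
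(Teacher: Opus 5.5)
Your proof is correct and takes essentially the standard route: the paper omits the proof and defers to Wang's original Lemma 7, which uses the same reduction to Smith form. You substitute $y=V^{-1}z$, observe that the unimodular $U$ and $V$ preserve both the congruence modulo $p^k$ and the condition $z\not\equiv 0\pmod p$, and then read off $d_jy_j\equiv 0\pmod{p^k}$ with $y_j$ a unit; your choice $z=Ve_n$ for the converse and your treatment of the case $d_n=0$ are also fine.
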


A set of vectors $v_1, \ldots, v_m \in (\mathbb{Z}/p^k\mathbb{Z})^n$ is called \emph{linearly independent} if $\sum_{i=1}^m c_i v_i = 0$ implies $c_i = 0$ for all $i$. It is known that $v_1,\ldots, v_m$ are linearly independent if and only if their projections in the vector space $(\mathbb{Z}/p\mathbb{Z})^n$ are linearly independent (over the field $\mathbb{Z}/p\mathbb{Z}$). A key property of the ring $\mathbb{Z}/p^k\mathbb{Z}$ is the so-called Basis Extension Property (or Steinitz Property \cite{mcdonald}), which states that any linearly independent vectors in a free $\mathbb{Z}/p^k\mathbb{Z}$-module $M$ can always be extended to a basis of $M$. Since we only require the finite-rank case, we formalize it in the following proposition.

\begin{proposition}\label{bas}
	Let $M$ be a free submodule of rank $m$ in $(\mathbb{Z}/p^k\mathbb{Z})^n$. Let $v_1, \ldots, v_k \in M$ ($k\le m$) be linearly independent. Then:
	
	\textup{(i)} If $k=m$, the set $\{v_1, \ldots, v_k\}$ constitutes a basis of $M$.
	
	\textup{(ii)}  If $k< m$, there exist $m-k$ vectors $v_{k+1}, \ldots, v_{m} \in M$ such that $\{v_1,\ldots,v_m\}$ constitutes a basis of $M$.
\end{proposition}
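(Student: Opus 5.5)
Throughout the argument I will write $s$ for the number of given vectors, so that $k$ keeps its meaning as the exponent in $R=\mathbb{Z}/p^k\mathbb{Z}$. The plan is to transport everything to the standard free module $R^m$ via a basis of $M$, do the extension over $\mathbb{Z}/p\mathbb{Z}$, and lift back using the fact that a square matrix over the local ring $R$ is invertible if and only if its determinant is a unit, i.e.\ nonzero modulo $p$.

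Fix a basis $u_1,\ldots,u_m$ of $M$, which exists since $M$ is free of rank $m$. Then the map $\phi\colon R^m\to M$, $x\mapsto \sum_j x_j u_j$, is an $R$-module isomorphism. Write $v_i=\sum_{j=1}^m a_{ij}u_j$ and let $A=(a_{ij})$ be the $s\times m$ coefficient matrix. A relation $\sum_i c_iv_i=0$ is the same as $c^{\mathsf T}A=0$, because $\phi$ is injective. Hence $v_1,\ldots,v_s$ are linearly independent in $(\mathbb{Z}/p^k\mathbb{Z})^n$ if and only if the rows of $A$ are linearly independent in $R^m$. By the fact recalled just before the proposition (applied in $R^m$), this holds if and only if the reduction $\bar A$ of $A$ modulo $p$ has rank $s$ over $\mathbb{Z}/p\mathbb{Z}$.

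Next, by ordinary linear algebra over the field $\mathbb{Z}/p\mathbb{Z}$, I append $m-s$ rows to $\bar A$ to obtain a nonsingular $m\times m$ matrix $\bar B$. I then lift the new rows arbitrarily to $R$, obtaining an $m\times m$ matrix $B$ over $R$ whose first $s$ rows are exactly those of $A$. Since $\det B\equiv\det\bar B\not\equiv 0\pmod p$, $\det B$ is a unit in $R$, so $B$ is invertible over $R$ (its inverse is the adjugate divided by the determinant). Now define $v_{s+1},\ldots,v_m\in M$ from the extra rows of $B$ by $v_i=\sum_j b_{ij}u_j$. Since $B$ is invertible, each $u_j$ is an $R$-combination of $v_1,\ldots,v_m$, so these vectors generate $M$. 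They are also independent, because $c^{\mathsf T}B=0$ forces $c=0$. Thus $\{v_1,\ldots,v_m\}$ is a basis, which proves (ii). For (i), where $s=m$, no rows are appended: $A$ itself is square with $\det A\not\equiv0\pmod p$, hence invertible, and the same argument shows $v_1,\ldots,v_m$ is a basis.

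The only delicate step is the reduction of independence over $R$ to independence modulo $p$ of the coefficient rows. The transfer through $\phi$ is what makes this work, and the equivalence itself is the cited fact applied in $R^m$ rather than in $(\mathbb{Z}/p^k\mathbb{Z})^n$. The lifting step is routine once one notes that units of $R$ are precisely the elements not divisible by $p$.
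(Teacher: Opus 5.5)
Your proof is correct, and it takes a different route from the paper. The paper gives no argument of its own for this proposition: it quotes the Steinitz (basis extension) property of $\mathbb{Z}/p^k\mathbb{Z}$ from McDonald's book, where it is stated for free modules of arbitrary rank.

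You instead prove the finite-rank case directly, in three moves:
\begin{itemize}
\item coordinatizing $M$ by a basis reduces everything to the rows of a coefficient matrix in $R^m$;
\item the mod-$p$ independence criterion turns the hypothesis into full row rank of $\bar A$ over $\mathbb{Z}/p\mathbb{Z}$;
\item the completion over the field lifts to $R$, because a square matrix over $R=\mathbb{Z}/p^k\mathbb{Z}$ is invertible once its determinant is nonzero modulo $p$.
\end{itemize}
The citation buys brevity and generality. Your argument buys a self-contained proof from elementary facts. It also gives slightly more: any lift of any completion modulo $p$ is a valid extension.

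The one ingredient you still take on trust is the independence criterion, which the paper also only asserts. You only need the direction ``independent over $R$ implies the reductions are independent''. That takes one line: a nontrivial relation $\sum_i \bar c_i\bar v_i=0$ modulo $p$, lifted and multiplied by $p^{k-1}$, is a nontrivial relation over $R$.

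Renaming the number of vectors to $s$ is also the right call, since the statement as written overloads $k$.
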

\section{Proof of Theorem \ref{main}}\label{pf}
We begin with the fundamental matrix characterization of generalized cospectrality for graphs.

\begin{lemma}[\cite{johnson,wang2006}]
	Let $G$ and $H$ be two graphs of the same order. Then $G$ and $H$ are generalized cospectral if and only if there exists a regular orthogonal matrix $Q$ such that $Q^\mathsf{T} A(G) Q=A(H)$. Moreover, if $G$ is controllable, then $Q^\mathsf{T}=W(H)(W(G))^{-1}$, and hence $Q$ is unique and rational.
\end{lemma}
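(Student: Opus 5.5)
The plan is to prove the two directions of the equivalence separately, and then obtain the formula for $Q$ by applying $Q^\mathsf{T}$ to the columns of $W(G)$. Throughout, write $A=A(G)$, $B=A(H)$ and $J=ee^\mathsf{T}$, and note that $A(\overline{G})=J-I-A$.

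For the easy direction, suppose $Q$ is regular orthogonal with $Q^\mathsf{T}AQ=B$. Since $Qe=e$ and $Q$ is orthogonal, $Q^\mathsf{T}e=Q^\mathsf{T}Qe=e$. Hence $Q^\mathsf{T}JQ=(Q^\mathsf{T}e)(Q^\mathsf{T}e)^\mathsf{T}=J$. It follows that $Q^\mathsf{T}(J-I-A)Q=J-I-B$. Thus $G,H$ are cospectral and $\overline{G},\overline{H}$ are cospectral, because orthogonal similarity preserves the spectrum.

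For the converse, I would first reformulate generalized cospectrality. Using the rank-one determinant identity
\begin{equation*}
\det(xI-A-tJ)=\det(xI-A)\bigl(1-t\,e^\mathsf{T}(xI-A)^{-1}e\bigr),
\end{equation*}
and the fact that $\det(xI-(J-I-A))$ is, up to sign and the substitution $x\mapsto -x-1$, the polynomial $\det(xI-A-tJ)$ at $t=-1$, I aim to show the following. If $G,H$ are generalized cospectral, then they are cospectral and $e^\mathsf{T}(xI-A)^{-1}e=e^\mathsf{T}(xI-B)^{-1}e$ as rational functions. Write the spectral decompositions $A=\sum_\lambda \lambda P_\lambda$ and $B=\sum_\lambda\lambda P'_\lambda$. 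Then $e^\mathsf{T}(xI-A)^{-1}e=\sum_\lambda \|P_\lambda e\|^2/(x-\lambda)$, so comparing partial fractions gives $\|P_\lambda e\|=\|P'_\lambda e\|$ for every eigenvalue $\lambda$.

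Next I construct $Q$. For each eigenvalue $\lambda$, with eigenspaces of equal dimension in $A$ and $B$, choose orthonormal bases $U_\lambda$ of the $A$-eigenspace and $V_\lambda$ of the $B$-eigenspace. When $P_\lambda e\neq 0$, choose the first vectors to be $P_\lambda e/\|P_\lambda e\|$ and $P'_\lambda e/\|P'_\lambda e\|$ respectively; this is where the norm equality is used. Set $Q=\sum_\lambda U_\lambda V_\lambda^\mathsf{T}$. This $Q$ is orthogonal and maps each $B$-eigenspace isometrically onto the corresponding $A$-eigenspace, so $Q^\mathsf{T}AQ=B$. By construction $QP'_\lambda e=P_\lambda e$ for all $\lambda$, and summing over $\lambda$ gives $Qe=e$, so $Q$ is regular. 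The only delicate points are the determinant reformulation and handling eigenvalues with $P_\lambda e=0$; those are covered by arbitrary orthonormal bases, since both sides are zero there.

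Finally, suppose $G$ is controllable and $Q$ is any regular orthogonal matrix with $Q^\mathsf{T}AQ=B$. Then $Q^\mathsf{T}A^k=B^kQ^\mathsf{T}$ and $Q^\mathsf{T}e=e$, so $Q^\mathsf{T}A^ke=B^ke$ for every $k$. Applying this column by column gives $Q^\mathsf{T}W(G)=W(H)$. Since $W(G)$ is invertible, $Q^\mathsf{T}=W(H)W(G)^{-1}$. This determines $Q$ uniquely, and $Q$ is rational because $W(G)$ and $W(H)$ are integral matrices.
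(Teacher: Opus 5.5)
Your proof is correct and complete. The paper gives no proof of this lemma; it only cites Johnson--Newman and Wang--Xu, so there is no internal argument to compare against. What you wrote is essentially the classical argument behind those citations:
\begin{itemize}
\item The rank-one determinant identity reduces generalized cospectrality to cospectrality together with equality of the main angles $\|P_\lambda e\|=\|P'_\lambda e\|$.
\item Those equal main angles let you glue orthonormal eigenbases so that the resulting orthogonal $Q$ satisfies $Qe=e$.
\item In the controllable case, the identity $Q^\mathsf{T}A^ke=B^ke$, i.e.\ $Q^\mathsf{T}W(G)=W(H)$, gives uniqueness and rationality, as in Wang--Xu.
\end{itemize}

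One remark on economy. When $G$ is controllable, you could get existence without any spectral decomposition. The Laurent expansion at infinity of $e^\mathsf{T}(xI-A)^{-1}e=e^\mathsf{T}(xI-B)^{-1}e$, which you already derived, gives $e^\mathsf{T}A^ke=e^\mathsf{T}B^ke$ for all $k$. Hence $W(G)^\mathsf{T}W(G)=W(H)^\mathsf{T}W(H)$, which also forces $W(H)$ to be nonsingular. Then $Q:=W(G)W(H)^{-1}$ is orthogonal. It satisfies $Qe=e$ by comparing first columns. It satisfies $AQ=QB$ because $AW(G)=W(G)C$ and $BW(H)=W(H)C$ for the common companion matrix $C$ of the shared characteristic polynomial.

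That route yields the explicit rational formula at the same time as existence. However, it only works when $G$ is controllable. Your main-angle construction is therefore the right tool for the unconditional equivalence asserted in the first sentence of the statement.
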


In the following, let $G$ be an $n$-vertex controllable graph and $p$ be an odd prime factor of $\det W(G)$ such that $\rank_p W(G)=n-1$. Denote $\tau=v_p(L(G))$, i.e., $\tau=\max\{v_p (\ell(Q))\colon\, Q\in \mathcal{Q}(G)\}$. Note that if $\tau=0$, then Theorem \ref{main} clearly holds. Thus, we may assume that $\tau\ge 1$. For simplicity, the adjacency matrix $A(G)$ and the walk matrix $W(G)$ will be denoted by $A$ and $W$, respectively, when there is no confusion.

%\begin{lemma}[\cite{wang2006}]\label{basic}
%	We have $\tau\le v_p(d_n)$, and hence $\tau\le v_p(\det W)$, where $d_n$ is the $n$-th invariant factor of the integral matrix $W$.
%\end{lemma}

\begin{lemma}\label{fourcong}
	There exist an integral vector $z_0$ with $z_0\not\equiv 0\pmod{p}$ and an integer $\lambda_0$ such that $z_0^\mathsf{T} z_0\equiv 0\pmod{p^{2\tau}}$, $z_0^\mathsf{T} A z_0\equiv 0\pmod{p^{2\tau}}$, $W^\mathsf{T} z_0\equiv 0\pmod{p^\tau}$, and $Az_0\equiv \lambda_0z_0\pmod{p^\tau}$.
\end{lemma}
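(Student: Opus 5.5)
Take $Q\in\mathcal{Q}(G)$ with $v_p(\ell(Q))=\tau\ge 1$; such a $Q$ exists because $\tau=v_p(L(G))$ is a maximum over $\mathcal{Q}(G)$. Let $\ell=\ell(Q)$ and put $\hat Q=\ell Q$, which is an integral matrix. By the minimality of $\ell$, the entries of $\hat Q$ have no common factor $p^{\tau}$ together with $\ell$. Hence some column $z$ of $\hat Q$ satisfies $z\not\equiv 0\pmod p$; indeed, if every column were divisible by $p$, then $(\ell/p)Q$ would be integral. Write $\ell=p^\tau m$ with $p\nmid m$. We take $z_0=z$, or equivalently the $j$-th column of $\hat Q$ where $j$ is a column index with $\hat Q e_j\not\equiv 0\pmod p$. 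The plan is to read off all four congruences from orthogonality, from the relation $Q^\mathsf{T} AQ=A(H)$ with $H$ the corresponding graph, and from the walk-matrix relation $Q^\mathsf{T} W=W(H)$.

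For the first two congruences, use $\hat Q^\mathsf{T}\hat Q=\ell^2 I$. Its $(j,j)$ entry gives $z_0^\mathsf{T} z_0=\ell^2\equiv 0\pmod{p^{2\tau}}$. Next, $\hat Q^\mathsf{T} A\hat Q=\ell^2 A(H)$, so its $(j,j)$ entry gives $z_0^\mathsf{T} Az_0=\ell^2 (A(H))_{jj}\equiv 0\pmod{p^{2\tau}}$; in fact the diagonal entry is $0$. For the third congruence, the Lemma above gives $Q^\mathsf{T} W=W(H)$, which is integral. Then $\hat Q^\mathsf{T} W=\ell W(H)\equiv 0\pmod{p^\tau}$, and the $j$-th row of this is $z_0^\mathsf{T} W\equiv 0\pmod{p^\tau}$, i.e. $W^\mathsf{T} z_0\equiv 0\pmod{p^\tau}$.

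The fourth congruence is the main obstacle. From $AQ=QA(H)$ we get $A\hat Q=\hat Q A(H)$, so $Az_0$ is an integral combination of columns of $\hat Q$. That alone does not give an eigenvector relation. Here I would use $\rank_p W=n-1$. Every column $y$ of $\hat Q$ satisfies $W^\mathsf{T} y\equiv 0\pmod{p^\tau}$, and $Ay$ does as well, because $W^\mathsf{T} A y$ consists of the entries $e^\mathsf{T}A^{i}y$ for $i=1,\ldots,n$, and the $i=n$ entry reduces via Cayley--Hamilton to a combination of the lower ones. So the solution module $\mathcal{K}=\{y\in(\mathbb{Z}/p^\tau\mathbb{Z})^n: W^\mathsf{T} y=0\}$ is $A$-invariant.

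Since $\rank_p W=n-1$, the invariant factors of $W^\mathsf{T}$ over $\mathbb{Z}/p^\tau\mathbb{Z}$ are $1,\ldots,1$ ($n-1$ times) and possibly one further factor $p^{c}$. To make $\mathcal{K}$ free of rank one, I would read it as the kernel of only the first $n-1$ unit invariant factors, or argue directly: Proposition \ref{dnk} together with $z_0\not\equiv0\pmod p$ forces $p^\tau\mid d_n$, so the last invariant factor vanishes modulo $p^\tau$. Then Proposition \ref{kf} gives $\mathcal{K}\cong\mathbb{Z}/p^\tau\mathbb{Z}$, free of rank $1$. Because $z_0\not\equiv 0\pmod p$, the vector $z_0$ is linearly independent, so by Proposition \ref{bas}(i) it is a basis of $\mathcal{K}$. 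Finally, $Az_0\in\mathcal{K}$ implies $Az_0\equiv\lambda_0 z_0\pmod{p^\tau}$ for some integer $\lambda_0$.
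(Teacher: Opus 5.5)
Your proposal is correct and is essentially the paper's proof. The first three congruences come from $\hat Q^\mathsf{T}\hat Q=\ell^2 I$, from $\hat Q^\mathsf{T} A\hat Q=\ell^2A(H)$, and from the integrality of $W^\mathsf{T} Q=(W(H))^\mathsf{T}$. The eigenvector relation comes from the Cayley--Hamilton $A$-invariance of the free rank-one module $\{z\colon W^\mathsf{T} z\equiv 0\pmod{p^\tau}\}$, of which $z_0$ is a basis by Propositions \ref{kf} and \ref{bas}(i).

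Your direct argument via Proposition \ref{dnk} that the last invariant factor vanishes modulo $p^\tau$ is the right one, and it makes explicit what the paper merely asserts as $p^\tau\mid\det W$. Drop the vaguer alternative of treating $\mathcal{K}$ as the kernel of only the unit invariant factors: if the last factor were $p^c$ with $0<c<\tau$, the kernel would not be free.
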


\begin{proof}
	Let $Q\in \mathcal{Q}(G)$ be such that $v_p(\ell(Q))=\tau$, and let $H$ be the corresponding graph. Let $\ell=\ell(Q)$ and $\hat{Q}=\ell Q$. Then $\hat{Q}$ is an integral matrix and $\hat{Q}\not\equiv 0\pmod{p}$. Let $z_0$ be a column of $\hat{Q}$ such that $z_0\not\equiv 0\pmod{p}$. Since $Q$ is an orthogonal matrix and $p^\tau\mid \ell$, we have $\hat{Q}^\mathsf{T} \hat{Q}=\ell^2 I$, and hence $z_0^\mathsf{T} z_0\equiv 0\pmod{p^{2\tau}}$. Similarly, as $Q^\mathsf{T} A Q$ is a $(0,1)$-matrix, we have $\hat{Q}^\mathsf{T} A\hat{Q}\equiv 0\pmod{p^{2\tau}}$, and hence $z_0^\mathsf{T} A z_0\equiv 0\pmod{p^{2\tau}}$. Moreover, noting that $W^\mathsf{T} Q$ equals $(W(H))^\mathsf{T}$, which is an integral matrix, we find that $W^\mathsf{T} \hat{Q}\equiv 0\pmod{\ell}$, and hence $W^\mathsf{T} z_0\equiv 0\pmod{p^\tau}$.
	
	Let $M=\{z\in \mathbb{Z}^n\colon\, W^\mathsf{T} z\equiv 0\pmod {p^\tau}\}$. Since $\rank_p W=n-1$ and $p^\tau \mid \det W$, we see that, over $\mathbb{Z}/p^\tau \mathbb{Z}$, the SNF of $W^\mathsf{T}$ is $\diag(1,1,\ldots,1,0)$. It follows from Proposition \ref{kf} that $M$ is a free $\mathbb{Z}/p^\tau \mathbb{Z}$-module of rank one. Since $z_0\in M$ and $z_0\not\equiv 0\pmod{p}$, we see that $\{z_0\}$ forms a basis for $M$ over $\mathbb{Z}/p^\tau \mathbb{Z}$ by Proposition \ref{bas} (i). On the other hand, by the Cayley-Hamilton Theorem, it is easy to see that $M$ is an $A$-invariant $\mathbb{Z}/p^\tau \mathbb{Z}$-submodule. This implies that there exists an integer $\lambda_0$ such that $Az_0\equiv \lambda_0 z_0\pmod{p^\tau}$. This completes the proof.	
\end{proof}
\begin{lemma}\label{wy}
	If $(A-\lambda_0 I)y\equiv s p^j z_0\pmod{p^{j+\tau}}$ for some integer $s$ and integer $j\ge 0$, then
	\begin{equation*}
		W^\mathsf{T} y\equiv (e^\mathsf{T} y)(1,\lambda_0,\ldots,\lambda_0^{n-1})^\mathsf{T} \pmod{p^{j+\tau}}.
	\end{equation*}
\end{lemma}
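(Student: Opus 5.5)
We compare the $i$-th entries of both sides. The $(i+1)$-th entry of $W^\mathsf{T} y$ is $e^\mathsf{T} A^i y$ for $i=0,1,\ldots,n-1$, so the plan is to prove by induction on $i$ that
\begin{equation*}
e^\mathsf{T} A^i y\equiv \lambda_0^i\, e^\mathsf{T} y \pmod{p^{j+\tau}}.
\end{equation*}
The key input is that $z_0$ is "orthogonal" to all the columns of $W$ to order $p^\tau$. By Lemma \ref{fourcong}, $W^\mathsf{T} z_0\equiv 0\pmod{p^\tau}$, that is, $e^\mathsf{T} A^m z_0\equiv 0\pmod{p^\tau}$ for all $0\le m\le n-1$. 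By the Cayley--Hamilton theorem, $A^m$ for $m\ge n$ is an integral combination of $I,A,\ldots,A^{n-1}$. Hence the same congruence holds for every $m\ge 0$, though we only need $m\le n-2$.

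The inductive step runs as follows. Write $(A-\lambda_0 I)y= sp^jz_0+p^{j+\tau}u$ with $u$ integral. Multiplying on the left by $e^\mathsf{T} A^{i}$ and using $A^{i}(A-\lambda_0I)=A^{i+1}-\lambda_0A^i$ gives
\begin{equation*}
e^\mathsf{T} A^{i+1}y-\lambda_0 e^\mathsf{T} A^i y = sp^j\, e^\mathsf{T} A^i z_0 + p^{j+\tau} e^\mathsf{T} A^i u .
\end{equation*}
The last term is divisible by $p^{j+\tau}$. The first term on the right is also divisible by $p^{j+\tau}$, since $p^\tau\mid e^\mathsf{T} A^i z_0$. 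Therefore
\begin{equation*}
e^\mathsf{T} A^{i+1}y\equiv\lambda_0\, e^\mathsf{T} A^i y \pmod{p^{j+\tau}}.
\end{equation*}
Iterating from $i=0$ yields $e^\mathsf{T} A^i y\equiv \lambda_0^i e^\mathsf{T} y$ for all $i\le n-1$, which is exactly the claimed vector congruence.

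There is no real obstacle here. The only point needing care is tracking the moduli: the factor $p^j$ from the hypothesis combines with the factor $p^\tau$ from $W^\mathsf{T} z_0\equiv 0$ to give exactly $p^{j+\tau}$. The symmetry of $A$ is not even needed, since we multiply by $e^\mathsf{T} A^i$ on the left directly.
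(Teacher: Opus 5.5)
Your proof is correct and is essentially the paper's argument: induction on the power $i$, using $e^\mathsf{T}A^i z_0\equiv 0\pmod{p^\tau}$ from Lemma \ref{fourcong} so that the $sp^jz_0$ term vanishes modulo $p^{j+\tau}$. The Cayley--Hamilton remark is harmless but unnecessary, since, as you note, only $i\le n-2$ is used.
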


\begin{proof}
	We proceed by induction on $k$ to show that $e^\mathsf{T} A^k y\equiv \lambda_0^k e^\mathsf{T} y\pmod{p^{j+\tau}}$ for $k=0,1,\ldots,n-1$. The base case $k=0$ is trivial. Assume that the congruence holds for some integer $k$ with $0 \le k < n-1$; we proceed to verify it for $k+1$. 
	
	By Lemma \ref{fourcong}, we have $W^\mathsf{T} z_0\equiv 0\pmod{p^\tau}$, which implies that $e^\mathsf{T} A^k z_0\equiv 0\pmod{p^\tau}$. Consequently, we obtain $e^\mathsf{T} A^k (sp^j z_0)\equiv 0\pmod{p^{j+\tau}}$. By the hypothesis of this lemma, $Ay \equiv  sp^j z_0+\lambda_0 y \pmod{p^{j+\tau}}$, which implies that
	\[
	e^\mathsf{T} A^{k+1}y \equiv e^\mathsf{T} A^k (sp^j z_0+\lambda_0 y) \equiv \lambda_0 e^\mathsf{T} A^k y \equiv \lambda_0^{k+1}e^\mathsf{T} y \pmod{p^{j+\tau}}.
	\]
	This completes the proof.	
\end{proof}
\begin{lemma}\label{rA}
	Let $S=\diag(f_1,\ldots,f_n)$ be the SNF of $A-\lambda_0 I$. Then $f_{n-2}\not \equiv 0\pmod{p}$ and $f_n\equiv 0\pmod{p^\tau}$.
\end{lemma}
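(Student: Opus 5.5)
The lemma has two parts, and I will handle them separately. The second one, $f_n\equiv 0\pmod{p^\tau}$, is the easy part. By Lemma \ref{fourcong}, $(A-\lambda_0 I)z_0\equiv 0\pmod{p^\tau}$ with $z_0\not\equiv 0\pmod p$. Applying Proposition \ref{dnk} to the square integral matrix $M=A-\lambda_0 I$ with $k=\tau$ gives $p^\tau\mid f_n$ directly. For this to make sense we need $A-\lambda_0I$ to have full SNF $\diag(f_1,\ldots,f_n)$. If $A-\lambda_0 I$ were singular over $\mathbb{Q}$, its last invariant factor would be $0$, which is still divisible by $p^\tau$. So the statement holds in either reading.

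The first part, $f_{n-2}\not\equiv 0\pmod p$, says that $\rank_p(A-\lambda_0 I)\ge n-2$. Equivalently, the eigenspace $E=\ker_p(A-\lambda_0 I)$ over $\mathbb{F}_p=\mathbb{Z}/p\mathbb{Z}$ has dimension at most $2$. My plan is to use the controllability information $\rank_p W=n-1$.

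Write $\bar W=W \bmod p$; its column space is the Krylov space of $e$ under $\bar A$. Over $\mathbb{F}_p$, the column space of $\bar W$ has dimension $n-1$. Its orthogonal complement is spanned by $\bar z_0$, since $W^\mathsf{T} z_0\equiv 0\pmod p$ and $z_0\not\equiv 0$. The key standard fact is that a cyclic subspace $\langle e,\bar Ae,\ldots\rangle$ of dimension $n-1$ meets each eigenspace of $\bar A$ in dimension at most $1$. This holds because $\bar A$ restricted to a cyclic subspace has cyclic minimal polynomial, so each eigenvalue has geometric multiplicity at most one there. Hence
\[
\dim E\le \dim(E\cap \operatorname{col}\bar W)+\operatorname{codim}\operatorname{col}\bar W\le 1+1=2.
\]
This gives $\rank_p(A-\lambda_0I)\ge n-2$, and therefore $f_{n-2}\not\equiv 0 \pmod p$.

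The step I expect to need the most care is justifying that $\bar A$ restricted to the $\bar A$-invariant subspace $\operatorname{col}\bar W$ is cyclic. I would argue it directly. The vectors $e,\bar Ae,\ldots,\bar A^{n-2}e$ are linearly independent: if some $\bar A^je$ with $j<n-1$ were a combination of earlier ones, the Krylov space would stabilize below dimension $n-1$. They therefore form a basis of $\operatorname{col}\bar W$, which is $\bar A$-invariant by Cayley--Hamilton, so $e$ is a cyclic vector for the restriction. For a linear map $T$ with a cyclic vector, $\rank(T-\lambda I)\ge \dim-1$, since in the companion-matrix form the subdiagonal ones give a nonsingular minor of size $\dim-1$. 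This yields $\dim(E\cap\operatorname{col}\bar W)\le 1$, which completes the argument.
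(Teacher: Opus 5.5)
Your proof is correct. The part $f_n\equiv 0\pmod{p^\tau}$ matches the paper's argument. For $f_{n-2}\not\equiv 0\pmod p$ you take a different, dual route.

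The paper argues by contradiction. If $\rank_p(A-\lambda_0 I)\le n-3$, then $B=\begin{bmatrix}A-\lambda_0 I\\ e^\mathsf{T}\end{bmatrix}$ has nullity at least $2$ over $\mathbb{Z}/p\mathbb{Z}$. Every $z$ with $Bz\equiv 0\pmod p$ satisfies $e^\mathsf{T}A^kz\equiv\lambda_0^k e^\mathsf{T}z\equiv 0$, hence $W^\mathsf{T}z\equiv 0\pmod p$. This gives two independent vectors in $\ker_p W^\mathsf{T}$, contradicting $\rank_p W=n-1$.

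So the paper places $E\cap e^{\perp}$ inside the one-dimensional annihilator $(\operatorname{col}\bar W)^{\perp}$. You instead intersect $E$ with $\operatorname{col}\bar W$ itself and use the fact that an operator with a cyclic vector has eigenspaces of dimension at most one.

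Your route gives a clean structural reason and does not use the symmetry of $A$. The paper needs $A^\mathsf{T}=A$ to read $W^\mathsf{T}z$ as $(e^\mathsf{T}A^kz)_k$. The paper's route is more hands-on, and it reuses the computation $e^\mathsf{T}A^kz\equiv\lambda_0^k e^\mathsf{T}z$ that also drives Lemma~\ref{wy} and the Claim in Lemma~\ref{lc}.

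Neither argument actually needs $z_0$ for this part; your remark that $\bar z_0$ spans $(\operatorname{col}\bar W)^\perp$ is never used. Both arguments in fact show $\rank_p(A-\lambda I)\ge n-2$ for every integer $\lambda$.
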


\begin{proof}
	By Lemma \ref{fourcong}, we have $(A-\lambda_0 I)z_0\equiv 0\pmod{p^\tau}$ and $z_0\not\equiv 0\pmod{p}$. It follows from Proposition \ref{dnk} that $f_n\equiv 0\pmod{p^\tau}$. 	It remains to show that $f_{n-2}\not \equiv 0\pmod{p}$. Suppose to the contrary that $f_{n-2}\equiv 0\pmod{p}$. Then $\rank_p (A-\lambda_0 I)\le n-3$. 
	Let \begin{equation*}
	B=\begin{bmatrix} A-\lambda_0 I\\e^\mathsf{T}\end{bmatrix}.\end{equation*}
	Then, we have $\rank_p B\le n-2$. Recall from Lemma \ref{fourcong} that $W^\mathsf{T} z_0 \equiv 0 \pmod{p}$, which implies $e^\mathsf{T} z_0 \equiv 0 \pmod{p}$. Together with $(A-\lambda_0 I)z_0 \equiv 0 \pmod{p}$, this yields $Bz_0 \equiv 0 \pmod{p}$. Since the nullity of $B$ over $\mathbb{Z}/p\mathbb{Z}$ is at least $n - (n-2) = 2$, the equation $Bz\equiv 0\pmod {p}$ must have a solution $z_1$ such that $z_0$ and $z_1$ are linearly independent over $\mathbb{Z}/p\mathbb{Z}$. 
	
	Noting that $Az_1\equiv \lambda_0 z_1 \pmod{p}$ and $e^\mathsf{T} z_1\equiv 0\pmod{p}$, we clearly have 
	\[
	e^\mathsf{T} A^{k}z_1\equiv \lambda_0^k e^\mathsf{T} z_1\equiv 0\pmod{p}
	\]
	for $k=0,1,\ldots, n-1$. This means that $W^\mathsf{T} z_1\equiv 0\pmod{p}$. By Lemma \ref{fourcong}, we also have $W^\mathsf{T} z_0\equiv 0\pmod{p}$. Therefore, $\rank_p W^\mathsf{T} \le n-2$. This contradicts the assumption that $\rank_p W = n-1$, which completes the proof.
\end{proof}
\begin{lemma} \label{lc}
		Let $M = [A-\lambda_0 I, z_0]$. Then $z_0^\mathsf{T} M \equiv 0 \pmod{p^\tau}$ and the SNF of $M$ is $[\diag(I_{n-1},0),0]$. Moreover, any integral  vector $z$ satisfying $z_0^\mathsf{T} z \equiv 0 \pmod{p^\tau}$ can be expressed as a linear combination of the columns of $M$ over $\mathbb{Z}/p^\tau \mathbb{Z}$.
	\end{lemma}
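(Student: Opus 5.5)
The first claim is immediate from symmetry of $A$. Lemma \ref{fourcong} gives $(A-\lambda_0 I)z_0\equiv 0\pmod{p^\tau}$, so transposing yields $z_0^\mathsf{T}(A-\lambda_0 I)\equiv 0\pmod{p^\tau}$. Also $z_0^\mathsf{T} z_0\equiv 0\pmod{p^{2\tau}}$, hence modulo $p^\tau$. Together these give $z_0^\mathsf{T} M\equiv 0\pmod{p^\tau}$.

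For the SNF (which I read as the SNF over $\mathbb{Z}/p^\tau\mathbb{Z}$; the relevant statements only concern $p$-parts), I would argue in two steps.

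\textbf{Rank modulo $p$.} First I show $\rank_p M= n-1$.
\begin{itemize}
\item Upper bound: since $z_0\not\equiv 0\pmod p$ and $z_0^\mathsf{T} M\equiv 0\pmod p$, the left kernel of $M$ over $\mathbb{Z}/p\mathbb{Z}$ is nontrivial, so $\rank_p M\le n-1$.
\item Lower bound: by Lemma \ref{rA}, $f_{n-2}\not\equiv 0\pmod p$, so $\rank_p(A-\lambda_0 I)\ge n-2$. If $\rank_p(A-\lambda_0I)=n-1$ we are done. Otherwise $\rank_p(A-\lambda_0I)=n-2$, and it suffices to show that $z_0$ is not in the column space of $A-\lambda_0 I$ modulo $p$.
\item Suppose instead $(A-\lambda_0 I)y\equiv z_0\pmod p$. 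Apply Lemma \ref{wy} with $j=0$, $s=1$, but only modulo $p$ (the proof of Lemma \ref{wy} works with modulus $p$ in place of $p^{j+\tau}$, since $W^\mathsf{T} z_0\equiv 0\pmod p$). This gives $W^\mathsf{T} y\equiv (e^\mathsf{T} y)(1,\lambda_0,\dots,\lambda_0^{n-1})^\mathsf{T}\pmod p$.
\item Set $y'=y-(e^\mathsf{T} y)u$ for a suitable $u$ handling the Vandermonde-type vector. A cleaner route avoids this: the null space of $A-\lambda_0 I$ mod $p$ is then 2-dimensional and contains $z_0$, and symmetry identifies the column space with the orthogonal complement of that null space. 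Since $z_0^\mathsf{T} z_0\equiv 0\pmod p$, the vector $z_0$ is orthogonal to itself, so showing $z_0$ is not in the column space amounts to showing $z_0^\mathsf{T} z_1\not\equiv 0\pmod p$ for the second null vector $z_1$.
\item I would instead argue as in Lemma \ref{rA}: if $\rank_p(A-\lambda_0 I)=n-2$, take $z_1$ in the null space independent of $z_0$ and adjust it by a multiple of $z_0$. One needs $e^\mathsf{T} z_1\equiv 0$, which is possible only if $e^\mathsf{T}$ vanishes on the null space. Otherwise $e^\mathsf{T} z_1\not\equiv 0$, and then $W^\mathsf{T} z_1\equiv (e^\mathsf{T} z_1)(1,\lambda_0,\dots)^\mathsf{T}$.
\end{itemize}
Combining this with $W^\mathsf{T} y$ and using $\rank_p W=n-1$, the left kernel of $W$ being spanned by $z_0$, should produce a contradiction. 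This case analysis is the main obstacle, and I would settle it by exhibiting two independent vectors in the left kernel of $W$ mod $p$.

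\textbf{Lifting to $\mathbb{Z}/p^\tau\mathbb{Z}$.} Once $\rank_p M=n-1$, the first $n-1$ invariant factors of $M$ are units. The $n$-th invariant factor is $0$ modulo $p^\tau$, because $z_0$ is a primitive vector with $z_0^\mathsf{T}M\equiv 0\pmod{p^\tau}$. This follows from the transpose version of Proposition \ref{dnk} applied to $M^\mathsf{T}$ (an $(n+1)\times n$ matrix). Hence the SNF over $\mathbb{Z}/p^\tau\mathbb{Z}$ is $[\diag(I_{n-1},0),0]$.

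\textbf{Final claim.} Let $z$ satisfy $z_0^\mathsf{T} z\equiv 0\pmod{p^\tau}$. By Proposition \ref{ssnf}, it suffices to show that $(M,z)$ has the same invariant factors as $M$. Its first $n-1$ are units. Moreover $z_0^\mathsf{T}(M,z)\equiv 0\pmod{p^\tau}$ with $z_0$ primitive, so by the same Proposition \ref{dnk} argument its last invariant factor is $0$ mod $p^\tau$. Hence $z$ lies in the column module of $M$ over $\mathbb{Z}/p^\tau\mathbb{Z}$.
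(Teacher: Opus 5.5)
Your overall structure is the paper's. You get $z_0^\mathsf{T}M\equiv 0$ from symmetry, use Proposition~\ref{dnk} to make the $n$-th invariant factor vanish over $R=\mathbb{Z}/p^\tau\mathbb{Z}$, and force $\rank_p M=n-1$ by a contradiction with $\rank_p W=n-1$. Your lifting step is fine. Deriving the final claim from Proposition~\ref{ssnf} applied to $(M,z)$ is a harmless variant of the paper's comparison of $\mathcal{C}(M)\cong R^{n-1}$ with $\{z\colon z_0^\mathsf{T}z=0\}\cong R^{n-1}$; both rest on the same finiteness argument.

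\textbf{The gap.} It lies in the only step with real content, the case $\rank_p(A-\lambda_0I)=n-2$. There you never derive the contradiction. You only say that combining $W^\mathsf{T}y$ with $W^\mathsf{T}z_1$ ``should'' produce one, and you flag it as the main obstacle after detours that lead nowhere. The orthogonal-complement reformulation, for instance, merely restates the problem as $z_0^\mathsf{T}z_1\not\equiv 0\pmod p$, which is no easier. As written, $\rank_p M=n-1$ is unproved, and with it the whole SNF claim.

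\textbf{How to close it.} The missing step uses only ingredients you already have, and it is exactly the paper's argument.
\begin{enumerate}
\item Let $(A-\lambda_0I)y\equiv z_0\pmod p$, and let $z_1$ be a null vector of $A-\lambda_0I$ modulo $p$ independent of $z_0$.
\item Then $y,z_0,z_1$ are linearly independent over $\mathbb{Z}/p\mathbb{Z}$. Applying $A-\lambda_0I$ to a relation $ay+bz_0+cz_1\equiv 0$ gives $az_0\equiv 0$, so $a\equiv 0$, and then $b\equiv c\equiv 0$.
\item Each $v$ in their span satisfies $(A-\lambda_0I)v\equiv sz_0\pmod p$ for some $s$. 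So the mod-$p$ version of Lemma~\ref{wy} gives $W^\mathsf{T}v\equiv(e^\mathsf{T}v)(1,\lambda_0,\ldots,\lambda_0^{n-1})^\mathsf{T}\pmod p$.
\item The functional $v\mapsto e^\mathsf{T}v$ on this $3$-dimensional space has a kernel of dimension at least $2$. That kernel lies in the null space of $W^\mathsf{T}$ modulo $p$, which contradicts $\rank_p W=n-1$.
\end{enumerate}
Explicitly, if $e^\mathsf{T}z_1\equiv 0$, the pair $z_0,z_1$ already gives the contradiction. Otherwise use $z_0$ together with
\[
y'=y-(e^\mathsf{T}y)(e^\mathsf{T}z_1)^{-1}z_1 .
\]
This is precisely your ``$y-(e^\mathsf{T}y)u$'' with $u=(e^\mathsf{T}z_1)^{-1}z_1$. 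It satisfies $e^\mathsf{T}y'\equiv 0$, hence $W^\mathsf{T}y'\equiv 0\pmod p$. It is not a multiple of $z_0$, because $(A-\lambda_0I)y'\equiv z_0\not\equiv 0$. With this inserted, your proof is complete.
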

	\begin{proof} By Lemma \ref{fourcong}, we have $(A-\lambda_0 I)z_0\equiv 0\pmod{p^\tau}$ and $z_0^\mathsf{T} z_0\equiv 0\pmod{p^\tau}$. As  $A-\lambda_0 I$ is symmetric, we find that
		\begin{equation}\label{mz}
			M^\mathsf{T} z_0=\begin{bmatrix}
				A-\lambda_0 I\\
				z_0^\mathsf{T}
			\end{bmatrix}z_0\equiv 0\pmod{p^\tau}.
		\end{equation} Let the SNF of $M$ be $S= [\diag(p^{c_1},p^{c_2}, \dots, p^{c_n}), 0]$. Clearly, the SNF of $M^\mathsf{T}$ is $S^\mathsf{T}$. Note that $z_0\not\equiv 0\pmod{p}$. It follows from Eq.~\eqref{mz} and Proposition \ref{dnk}  that $p^{c_n}$ is zero over $R$. Thus, the SNF of $M$ can be simplified as 
		\begin{equation}\label{ss}
			S=[\diag(p^{c_1},\ldots,p^{c_{n-1}},0),0].
		\end{equation}

\noindent\textbf{Claim}: $p^{c_{n-1}}$ is a unit in $R$, i.e., $c_{n-1}= 0$.
		
		Suppose to the contrary that $c_{n-1}\ge 1$. Then, we have $\rank_p S\le n-2$, or equivalently, $\rank_p M\le n-2$. On the other hand, by Lemma \ref{rA}, we see that $\rank_p (A-\lambda_0 I)\ge n-2$ and hence  $\rank_p M \ge n-2$. Thus, we must have  $\rank_p M=\rank_p (A-\lambda_0 I)=n-2$. It follows that there exists an integral vector $z_1$ such that $(A-\lambda_0 I)z_1\equiv z_0\pmod{p}$.	
		
		As $\rank_p  (A-\lambda_0 I)=n-2$, $(A-\lambda_0I)z\equiv 0\pmod{p}$ has two  solutions $z_2$ and $z_3$ that are linearly  independent over $\mathbb{Z}/p\mathbb{Z}$. Since $(A-\lambda_0 I)z_1\equiv z_0\not \equiv 0\pmod{p}$, $z_1$ cannot be written as a linear combination of $z_2$ and $z_3$. This implies that $z_1,z_2,z_3$ are linearly independent. Consider the equation $e^\mathsf{T}(k_1z_1+k_2z_2+k_3z_3)\equiv 0 \pmod{p}$ with three unknowns $k_1,k_2,k_3$. Clearly, it has at least two independent solutions over $\mathbb{Z}/p\mathbb{Z}$. Let $(a_1,a_2,a_3)^\mathsf{T}$ and  $(b_1,b_2,b_3)^\mathsf{T}$  be  two such solutions and write $\alpha=a_1z_1+a_2z_2+a_3z_3$ and $\beta=b_1z_1+b_2z_2+b_3z_3$. It is easy to see that $\alpha$ and $\beta$ are linearly independent over $\mathbb{Z}/p\mathbb{Z}$. Note that $(A-\lambda_0I)\alpha\equiv a_1z_0$ and $e^\mathsf{T}\alpha\equiv 0\pmod{p}$. Using a similar argument as in the proof of  Lemma~\ref{wy}, we find  that $W^\mathsf{T}\alpha \equiv 0\pmod{p}$. Also,  $W^\mathsf{T} \beta\equiv 0\pmod{p}$. Thus, we have found two linearly independent solutions of $W^\mathsf{T} z\equiv 0\pmod{p}$. This contradicts the fact that $\rank_p W^\mathsf{T}=n-1$ and hence completes the proof of the Claim.
		
		By the Claim, we see that Eq.~\eqref{ss} can be further reduced to 
		\begin{equation}\label{s3}
			S=[\diag(I_{n-1},0),0].
		\end{equation}
		Let $\mathcal{C}(M)$ be the $R$-module generated by the columns of $M$ and $N$ be the module $\{z\in R^n\colon\, z_0^\mathsf{T} z=0\}$. By Eq.~\eqref{s3}, we know that $\mathcal{C}(M)$ is isomorphic to the free module $R^{n-1}$. Since $z_0\not\equiv 0\pmod{p}$, we find that $N$ is also isomorphic to $R^{n-1}$. Thus, the two $R$-modules $\mathcal{C}(M)$ and $N$ are isomorphic. Since $z_0^\mathsf{T} M=0$ over $R$, we know that $\mathcal{C}(M)$ is a submodule of $N$. As $R$ is a finite ring, we must have $\mathcal{C}(M)=N$, which completes the proof of Lemma \ref{lc}.
	\end{proof}
	
\begin{lemma}\label{ez}
	There exists an  integral vector $z_1$ such that $e^\mathsf{T} z_1\not\equiv 0\pmod {p}$ and $(A-\lambda_0 I)z_1\equiv p^cz_0\pmod{p^\tau}$ for some $c\in \{0,1,\ldots,\tau\}$.
\end{lemma}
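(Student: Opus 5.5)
The plan is to study the solution module of the augmented system built from the matrix $M=[A-\lambda_0 I, z_0]$ of Lemma \ref{lc}, and to work throughout over $R=\mathbb{Z}/p^\tau\mathbb{Z}$. Let $K=\ker M\subseteq R^{n+1}$ be the set of pairs $(z,t)$ with $(A-\lambda_0 I)z+tz_0\equiv 0\pmod{p^\tau}$. By Lemma \ref{lc}, the SNF of $M$ is $[\diag(I_{n-1},0),0]$, so all nonzero invariant factors are units. Proposition \ref{kf} then shows that $K$ is a free $R$-module of rank $(n+1)-(n-1)=2$.

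\textbf{Step 1: a basis of $K$ with independent first components.} Lemma \ref{fourcong} gives $(A-\lambda_0 I)z_0\equiv 0\pmod{p^\tau}$, so $(z_0,0)\in K$. Since $z_0\not\equiv 0\pmod p$, this element is linearly independent. By Proposition \ref{bas}(ii) we extend it to a basis $\{(z_0,0),(y,t)\}$ of $K$; the reductions mod $p$ of the two basis vectors are independent over $\mathbb{Z}/p\mathbb{Z}$. I claim that $y$ and $z_0$ are independent mod $p$. Suppose instead $y\equiv a z_0\pmod p$. Independence of the basis forces $t\not\equiv 0\pmod p$. Reducing the defining relation mod $p$ then gives $tz_0\equiv -(A-\lambda_0 I)y\equiv -a(A-\lambda_0I)z_0\equiv 0\pmod p$, which contradicts $z_0\not\equiv 0\pmod p$.

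\textbf{Step 2: $e^\mathsf{T} y\not\equiv 0\pmod p$.} This is the heart of the argument. Suppose $e^\mathsf{T} y\equiv 0\pmod p$. We have $(A-\lambda_0 I)y\equiv (-t)z_0\pmod{p^\tau}$, so Lemma \ref{wy} applies with $j=0$ and $s=-t$. It gives $W^\mathsf{T} y\equiv (e^\mathsf{T} y)(1,\lambda_0,\ldots,\lambda_0^{n-1})^\mathsf{T}\equiv 0\pmod p$. Lemma \ref{fourcong} also gives $W^\mathsf{T} z_0\equiv 0\pmod p$. By Step 1, $y$ and $z_0$ are independent mod $p$, so $\rank_p W\le n-2$. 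This contradicts $\rank_p W=n-1$.

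\textbf{Step 3: normalization.} If $t\equiv 0\pmod{p^\tau}$, take $c=\tau$ and $z_1=y$; then $(A-\lambda_0 I)z_1\equiv 0=p^\tau z_0\pmod{p^\tau}$. Otherwise write $-t\equiv p^c u\pmod{p^\tau}$ with $0\le c<\tau$ and $u$ a unit mod $p$. Put $z_1=u^{-1}y$, where $u^{-1}$ is an integer inverse of $u$ mod $p^\tau$. Then $(A-\lambda_0 I)z_1\equiv p^c z_0\pmod{p^\tau}$, and $e^\mathsf{T} z_1\equiv u^{-1}e^\mathsf{T} y\not\equiv 0\pmod p$.

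The main obstacle is Step 2. It needs Step 1, namely that the second basis vector genuinely contributes a second direction mod $p$. Only then does Lemma \ref{wy} convert the vanishing of $e^\mathsf{T} y$ into a second independent kernel vector of $W^\mathsf{T}$ mod $p$.
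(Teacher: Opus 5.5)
Your proof is correct, and it takes a more economical route than the paper's.

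The paper starts from the SNF $\diag(1,\ldots,1,p^c,0)$ of $A-\lambda_0 I$ over $R=\mathbb{Z}/p^\tau\mathbb{Z}$. It then uses determinantal divisors to show that $[A-\lambda_0 I,p^iz_0]$ has SNF $[\diag(1,\ldots,1,p^i,0),0]$ for $0\le i\le c$ (its Claim 1), and splits into three cases:
\begin{itemize}
\item for $c=0$ and for $0<c<\tau$ it solves $(A-\lambda_0 I)z_1\equiv p^cz_0$ via Proposition \ref{ssnf};
\item when $0<c<\tau$ it needs a separate Claim 2, which passes to $\mathbb{Z}/p^{\tau-1}\mathbb{Z}$ and reuses Claim 1, to see that this $z_1$ is independent of $z_0$ mod $p$;
\item only when $c=\tau$ does it use basis extension, inside $\ker(A-\lambda_0 I)$.
\end{itemize}

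You take that last idea and apply it once to the kernel of the augmented matrix $[A-\lambda_0 I,z_0]$. Its freeness of rank $2$ comes straight from Lemma \ref{lc} and Proposition \ref{kf}. The second basis vector $(y,t)$ then supplies the exponent automatically, as $c=v_p(t)$ capped at $\tau$. Independence of $y$ and $z_0$ mod $p$ drops out of the basis property in two lines. So all cases are handled uniformly, and the determinantal-divisor bookkeeping and the auxiliary ring $\mathbb{Z}/p^{\tau-1}\mathbb{Z}$ disappear.

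What the paper's route buys is extra structural information. It identifies $c$ intrinsically from the $(n-1)$-th invariant factor of $A-\lambda_0 I$, and it determines the SNFs of all the matrices $[A-\lambda_0 I,p^iz_0]$. By Claim 1, your $v_p(t)$ in fact agrees with that $c$, since $tR$ is exactly the set of $t'$ with $t'z_0$ in the column module of $A-\lambda_0 I$. None of this is needed for Theorem \ref{main}, which uses only that $e^\mathsf{T} z_1$ is a unit and that $(A-\lambda_0 I)z_1$ is a multiple of $z_0$ modulo $p^\tau$.
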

\begin{proof}
	Let $R=\mathbb{Z}/p^\tau \mathbb{Z}$.  By Lemma \ref{rA}, we know that the SNF of $A-\lambda_0 I$ is
	\begin{equation}\label{smA}
		S=\diag(1,1,\ldots,1,p^c,0) \text{~for some~} c\in\{0,1,\ldots,\tau\}.
	\end{equation}
Let $M_i=[A-\lambda_0 I, p^i z_0]$ for $i\in\{0,1,\ldots,c\}$. 

\noindent\textbf{Claim 1}: The SNF of $M_i$ is $[\diag(1,1,\ldots,1,p^i,0),0]$ for $i\in\{0,1,\ldots,c\}$.

Let $S_i$ be the SNF of $M_i$. Noting that $\rank_p M_i\ge \rank_p (A-\lambda_0 I)\ge n-2$, we conclude that $S_i$ must have the form
\begin{equation}\label{si}
	S_i=[\diag(1,\ldots,1,p^{t_{i}},p^{t'_{i}}),0].
\end{equation}
Since $ M_i^\mathsf{T} z_0\equiv 0\pmod{p^\tau}$ and $z_0\not\equiv 0\pmod{p}$, Proposition \ref{dnk} indicates that $p^{t'_i}$ is the zero element in $R$. Thus, Eq.~\eqref{si} can be simplified as
\begin{equation}\label{st}
	S_i=[\diag(1,\ldots,1,p^{t_{i}},0),0].
\end{equation}
We need to show that $t_i=i$ for each $i\in\{0,1,\ldots,c\}$. Clearly, $t_0=0$ by Lemma \ref{lc}. Thus, it suffices to show that the sequence $\{t_i\}_{0\le i\le c}$ satisfies 
\begin{equation}\label{tc}
	t_c=c
\end{equation}
and 
\begin{equation}\label{icr}
	t_i\le t_{i+1}\le t_{i}+1\text{~for~} 0\le i<c.
\end{equation}
Now regard $A-\lambda_0 I$ and $M_i$ as matrices over $\mathbb{Z}$. Let $\Delta$ and $\Delta^{(i)}$ be the $(n-1)$-th determinantal factors of $A-\lambda_{0}I$ and $M_i$, respectively. Note that Eq.~\eqref{tc} holds for the case $c=\tau$. Indeed, if $c=\tau$ then $M_c=[A-\lambda_0 I,p^\tau z_0]=[A-\lambda_0 I,0]$ over $R$, which implies that the SNF of $M_c$ is $S_c=[S,0]$, i.e., $t_c=c$ as desired. Now assume $c<\tau$. From Eq.~\eqref{smA}, we know that $v_p(\Delta)=c$. Since $M_c$ contains $A-\lambda_0 I$ as a submatrix, we must have $v_p(\Delta^{(c)})\le v_p(\Delta)$ and hence  $v_p(\Delta^{(c)})\le c$. Let $D$ be any nonzero $(n-1)$-th minor of $M_c$. If $D$ does not include the last column of $M_c$, then $D$ is also a minor of $A-\lambda_0 I$ and hence $v_p(D)\ge v_p(\Delta)\ge c$. Otherwise,  each entry of the last column of $D$ is a multiple of $p^c$ and hence $v_p(D)\ge c$. Thus, we always have $v_p(D)\ge c$, which implies  $v_p(\Delta^{(c)})\ge c$. This means $v_p(\Delta^{(c)})= c$. But by Eq.~\eqref{st} for the case $i=c$, we know that $v_p(\Delta^{(c)})=t_c$ and hence $t_c=c$. This proves Eq.~\eqref{tc}.

We proceed to verify Eq.~\eqref{icr}. We may assume $c>0$ since otherwise we have nothing to show. Let $D^{(i)}$ and $D^{(i+1)}$ be any two corresponding $(n-1)$-th minors of $M_i$ and $M_{i+1}$, respectively. By the constructions of $M_i$ and $M_{i+1}$, we have either $D^{(i+1)}=D^{(i)}$ or $D^{(i+1)}=pD^{(i)}$. It follows that $v_p(\Delta^{(i)})\le v_p(\Delta^{(i+1)}) \le v_p(\Delta^{(i)}) +1$, i.e., $t_i\le t_{i+1}\le t_i+1$. Thus,  Claim 1 follows.

To complete the proof of Lemma \ref{ez}, we consider the following three cases:

\noindent\emph{Case 1}:  $c=0$. Since $A-\lambda_0 I$ and $M_0=[A-\lambda_0 I,z_0]$ have the same invariant factors over $R$, Proposition \ref{ssnf} implies that there exists an integral vector $z_1$ such that  \begin{equation}\label{as}
(A-\lambda_0 I)z_1\equiv z_0\pmod{p^\tau}.
\end{equation} By Lemma \ref{wy}, we have 
	\begin{equation*}
	W^\mathsf{T} z_1\equiv e^\mathsf{T} z_1(1,\lambda_0,\ldots,\lambda_0^{n-1})^\mathsf{T} \pmod{p^{\tau}}.
\end{equation*}
Suppose to the contrary that $e^\mathsf{T} z_1\equiv 0\pmod {p}$. Then we have $W^\mathsf{T} z_1\equiv 0\pmod{p}$.  Since $\rank_p W=n-1$ and $z_0$ is a nonzero (over $\mathbb{Z}/p\mathbb{Z}$) solution of $W^\mathsf{T} z\equiv 0\pmod{p}$, we conclude that $z_1\equiv kz_0\pmod{p}$ for some integer $k$. But this would imply 
\begin{equation*}
(A-\lambda_0 I) z_1\equiv k(A-\lambda_0 I) z_0\equiv 0\pmod{p},
\end{equation*}
which, combining with Eq.~\eqref{as}, leads to $z_0\equiv 0\pmod{p}$. This is a contradiction and hence we must have  $e^\mathsf{T} z_1\not\equiv 0\pmod {p}$.

\noindent\emph{Case 2}: $c=\tau$. As the SNF of $A-\lambda_0 I$ is $S=\diag(I_{n-1},0,0)$, we know that the kernel of $(A-\lambda_0 I)$ is a free $R$-module of rank $2$ by Proposition \ref{kf}. Let $K=\ker (A-\lambda_0 I)$ be the kernel. Since $z_0\in K$ and $z_0\not\equiv 0\pmod{p}$, Proposition \ref{bas} (ii) implies that there exists an integer vector $z_1$ such that $\{z_0,z_1\}$ (over $R$) constitutes a basis of the free module $K$. In particular, $(A-\lambda_0 I)z_1\equiv 0\pmod{p^\tau}$. Using Lemma \ref{wy}, we conclude that 
	\begin{equation*}
	W^\mathsf{T} z_1\equiv e^\mathsf{T} z_1(1,\lambda_0,\ldots,\lambda_0^{n-1})^\mathsf{T} \pmod{p^{\tau}}.
\end{equation*}
If $e^\mathsf{T} z_1\equiv 0\pmod{p}$ then we would have 	$W^\mathsf{T} z_1\equiv 0\pmod{p}$. But since $z_0$ and $z_1$ are independent over the field $\mathbb{Z}/p\mathbb{Z}$, we must have $\rank_p W^\mathsf{T}\le n-2$. This is a contradiction and hence $e^\mathsf{T} z_1\not\equiv 0\pmod{p}$.

\noindent{\emph{Case 3}}:  $0<c<\tau$. By Claim 1, we see that $A-\lambda_0 I$ and $M_c=[A-\lambda_0 I, p^c z_0]$ have the same invariant factors. By Proposition \ref{ssnf}, there exists an integral vector $z_1$ such that 
\begin{equation}\label{az1}
	(A-\lambda_0 I)z_1\equiv p^c z_0\pmod{p^\tau}.
\end{equation} 

\noindent\textbf{Claim 2}: $z_1$ and $z_0$ are linearly independent over $\mathbb{Z}/p\mathbb{Z}$.

Suppose to the contrary that $z_1$ and $z_0$ are linearly dependent over $\mathbb{Z}/p\mathbb{Z}$. As $z_0\not\equiv 0\pmod{p}$, there exist an integer $k$ and an integral vector $\tilde{z}_1$ such that $z_1=kz_0+p\tilde{z}_1$. Noting that $(A-\lambda_0 I)z_0\equiv 0\pmod{p^\tau}$, we obtain from Eq.~\eqref{az1} that $(A-\lambda_0 I)p\tilde{z}_1\equiv p^c z_0\pmod{p^\tau}$, i.e., 
\begin{equation}\label{az1s}
	(A-\lambda_0 I)\tilde{z}_1\equiv p^{c-1} z_0\pmod{p^{\tau-1}}.
	\end{equation}
Let $R'=\mathbb{Z}/p^{\tau-1}\mathbb{Z}$. By Eq.~\eqref{az1s}, we see that, over $R'$, the two matrices $A-\lambda_0 I$ and $M_{c-1}=[A-\lambda_0 I, p^{c-1}z_0]$ have the same invariant factors. Since $c\le \tau-1$, we see that $p^c$ and $p^{c-1}$ are not associates over $R'$. Thus, Claim 1 means that $A-\lambda_0 I$ and $M_{c-1}=[A-\lambda_0 I, p^{c-1}z_0]$ cannot have the same invariant factors. This contradiction completes the proof of Claim 2. 

By Eq.~\eqref{az1} and Lemma \ref{wy}, we have 
\begin{equation*}
	W^\mathsf{T} z_1\equiv e^\mathsf{T} z_1(1,\lambda_0,\ldots,\lambda_0^{n-1})^\mathsf{T}\pmod{p^\tau}.
\end{equation*}
By Claim 2 and the same argument as in Case 2, we must have $e^\mathsf{T} z_1\not\equiv 0\pmod{p}$. 

Combining the three cases,  Lemma \ref{ez} follows.
\end{proof}
Now we are in a position to present a proof of Theorem \ref{main}.

\begin{proof}[Proof of Theorem \ref{main}]  Let $\tau=v_p(L(G))$. Let $z_0$ and $\lambda_0$ be the integral vector and the integer as described in  Lemma \ref{fourcong}. Then we have 
\begin{equation*}
	z_0^\mathsf{T}(A-\lambda_0 I)z_0\equiv 0\pmod{p^{2\tau}}
\end{equation*}
and hence
\begin{equation*}
	z_0^\mathsf{T}\frac{(A-\lambda_0 I)z_0}{p^\tau}\equiv 0\pmod{p^{\tau}}.
\end{equation*}
It follows from Lemma \ref{lc} that 
\begin{equation}\label{aps}
\frac{(A-\lambda_0 I)z_0}{p^\tau}\equiv (A-\lambda_0 I)y+s z_0\pmod{p^{\tau}}
\end{equation}
for some $y\in \mathbb{Z}^{n}$ and $s\in \mathbb{Z}$.
Multiplying both sides of Eq.~\eqref{aps} by $p^\tau$ and rearranging the terms, we obtain
\begin{equation*}
	(A-\lambda_0 I)(z_0-p^\tau y)\equiv sp^\tau z_0\pmod{p^{2\tau}}.
\end{equation*}
Consequently, by Lemma \ref{wy}, we have
\begin{equation}\label{wp}
	W^\mathsf{T} (z_0-p^\tau y)\equiv e^\mathsf{T} (z_0-p^\tau y)(1,\lambda_0,\ldots,\lambda_0^{n-1})^\mathsf{T}\pmod{p^{2\tau}}.
\end{equation}
By Lemma \ref{ez}, there exist an integral vector $z_1$ and an integer $c\in\{0,1,\ldots,\tau\}$ such that $e^\mathsf{T} z_1\not\equiv 0\pmod{p}$ and
 \begin{equation}\label{apta}
(A-\lambda_0 I)z_1\equiv p^c z_0\pmod{p^\tau}.
\end{equation} As $e^\mathsf{T} (z_0-p^\tau y)\equiv e^\mathsf{T} z_0\equiv 0\pmod{p^\tau}$, we see that ${p^{-\tau}}e^\mathsf{T} (z_0-p^\tau y) $ is an integer. Note that $e^\mathsf{T} z_1$ is a unit in $\mathbb{Z}/p^{\tau}\mathbb{Z}$. Thus, there exists some integer $g$ such that 
\begin{equation}\label{ep}
	\frac{e^\mathsf{T} (z_0-p^\tau y)}{p^\tau}\equiv g e^\mathsf{T} z_1\pmod{p^{\tau}}.
\end{equation}
We know from Eq.~\eqref{apta} and Lemma \ref{wy} that 
\begin{equation*}
	W^\mathsf{T} z_1\equiv e^\mathsf{T} z_1(1,\lambda_0,\ldots,\lambda_0^{n-1})^\mathsf{T} \pmod{p^\tau}.
\end{equation*}
This, together with Eq.~\eqref{ep}, leads to 
\begin{equation*}
	W^\mathsf{T} (g z_1)\equiv g e^\mathsf{T} z_1(1,\lambda_0,\ldots,\lambda_0^{n-1})^\mathsf{T}\equiv 	\frac{e^\mathsf{T} (z_0-p^\tau y)}{p^\tau}(1,\lambda_0,\ldots,\lambda_0^{n-1})^\mathsf{T} \pmod{p^\tau}.
\end{equation*}
Multiplying both sides by $p^\tau$ and using Eq.~\eqref{wp}, we find that 
\begin{equation*}
W^\mathsf{T} (gp^\tau z_1) \equiv W^\mathsf{T} (z_0-p^\tau y)\pmod{p^{2\tau}},
\end{equation*}
i.e., 
\begin{equation*}
W^\mathsf{T} (z_0-p^\tau y-gp^\tau z_1) \equiv 0\pmod{p^{2\tau}}.
\end{equation*}
As $z_0-p^\tau y-g p^\tau z_1\equiv z_0\not\equiv 0\pmod{p}$, Proposition \ref{dnk} implies that $d_n\equiv 0\pmod{p^{2\tau}}$, where $d_n$ is the $n$-th invariant factor of the integral matrix $W^\mathsf{T}$. As $\rank_p W=n-1$, we have $v_p(\det W) =v_p(d_n)$ and hence $v_p(\det W)\ge 2\tau$. This completes the proof of Theorem \ref{main}.
\end{proof}
\section{Applications and future work}\label{dis}
As a natural extension of the arithmetic criterion of DGS-graphs established in \cite{wang2017JCTB}, Wang et al. \cite{wang2023ejc} defined, for each odd prime $p$,  a family of graphs 
 \begin{equation*}
\mathcal{F}_{n,p}=\{\text{$n$-vertex graphs~} G\colon\, 2^{-\lfloor n/2 \rfloor}\det W =p^2b\text{~and~} \rank_p W=n-1\},
 \end{equation*}
where $b$ is odd, square-free and $p\nmid b$. It was shown that each graph in $\mathcal{F}_{n,p}$ has at most one generalized cospectral mate \cite{wang2023ejc}. Using Theorem \ref{main}, we can show the same conclusion for a larger family of graphs:
 \begin{equation*}
 \widetilde{\mathcal{F}}_{n,p}=\{\text{$n$-vertex graphs~} G\colon\, 2^{-\lfloor n/2 \rfloor}\det W \in\{p^2b, p^3b\} \text{~and~} \rank_p W=n-1\},
 \end{equation*}
 where $p$ and $b$ satisfy the same restrictions. Indeed, let $G\in \widetilde{\mathcal{F}}_{n,p}$ and  $Q$ be any matrix in $\mathcal{Q}(G)$. Then, by Propositions \ref{oddcase} and \ref{evencase}, the only possible prime factor of $\ell(Q)$ is $p$. It follows from Theorem \ref{main} that
 \begin{equation*}
v_p(\ell(Q))\le \frac{1}{2}v_p(\det W)\le 3/2,
 \end{equation*}
 i.e., $v_p(\ell(Q))\in\{0,1\}$. In other words, $Q$ is either a permutation matrix, or a matrix with level exactly $p$. Now, the original argument in \cite{wang2023ejc} shows that $G$ has at most one generalized cospectral mate; refer to \cite{wang2023ejc} for details.
 
 The following small example illustrates the optimality of the family $\widetilde{\mathcal{F}}_{n,p}$ in the sense that $G$ may have two or more generalized cospectral mates if the $v_p(\det W)\ge 4$ for some odd prime $p$.
 
 \noindent\textbf{Example 1}. Let $n=10$ and $G$ be the $n$-vertex graph with adjacency matrix 
\begin{equation*}
	A={\footnotesize\begin{bmatrix}
		0 & 1 & 0 & 1 & 1 & 0 & 1 & 0 & 0 & 1 \\
	1 & 0 & 0 & 0 & 1 & 0 & 1 & 1 & 0 & 1 \\
	0 & 0 & 0 & 0 & 1 & 1 & 0 & 1 & 0 & 1 \\
	1 & 0 & 0 & 0 & 0 & 1 & 1 & 1 & 0 & 0 \\
	1 & 1 & 1 & 0 & 0 & 1 & 0 & 0 & 1 & 0 \\
	0 & 0 & 1 & 1 & 1 & 0 & 1 & 1 & 1 & 1 \\
	1 & 1 & 0 & 1 & 0 & 1 & 0 & 0 & 1 & 1 \\
	0 & 1 & 1 & 1 & 0 & 1 & 0 & 0 & 0 & 1 \\
	0 & 0 & 0 & 0 & 1 & 1 & 1 & 0 & 0 & 0 \\
	1 & 1 & 1 & 0 & 0 & 1 & 1 & 1 & 0 & 0 \\
	\end{bmatrix}}.
\end{equation*} 
It can be verified  that $2^{-\lfloor\frac{n}{2}\rfloor} \det W=3^4\times 19$ and $\rank_3 W=n-1$. By Theorem \ref{main}, we know that $v_3(L(G))\le \frac{1}{2} v_3(\det W)=2$. Indeed, using the program in \cite{wang2025}, we find that, besides the permutation matrices,  there exist essentially two rational regular orthogonal matrices 
\begin{equation*}
	Q_1=\frac{1}{3}{\footnotesize
	\begin{bmatrix}
		3 & 0 & 0 & 0 & 0 & 0 & 0 & 0 & 0 & 0 \\
		0 & 3 & 0 & 0 & 0 & 0 & 0 & 0 & 0 & 0 \\
		0 & 0 & 0 & 0 & -2 & 1 & 1 & 1 & 1 & 1 \\
		0 & 0 & 0 & 0 & 1 & -2 & 1 & 1 & 1 & 1 \\
		0 & 0 & 0 & 0 & 1 & 1 & -2 & 1 & 1 & 1 \\
		0 & 0 & 3 & 0 & 0 & 0 & 0 & 0 & 0 & 0 \\
		0 & 0 & 0 & 0 & 1 & 1 & 1 & -2 & 1 & 1 \\
		0 & 0 & 0 & 0 & 1 & 1 & 1 & 1 & -2 & 1 \\
		0 & 0 & 0 & 0 & 1 & 1 & 1 & 1 & 1 & -2 \\
		0 & 0 & 0 & 3 & 0 & 0 & 0 & 0 & 0 & 0 \\
	\end{bmatrix}}
\end{equation*}
and 
\begin{equation*}
Q_2=\frac{1}{9}{\footnotesize\begin{bmatrix}
		-3 & -3 & 3 & 3 & 3 & 0 & 0 & 0 & 0 & 6 \\
		3 & 3 & 6 & -3 & -3 & 0 & 0 & 0 & 0 & 3 \\
		1 & 1 & 2 & 2 & 2 & -6 & 3 & 3 & 3 & -2 \\
		1 & 1 & 2 & 2 & 2 & 3 & -6 & 3 & 3 & -2 \\
		-5 & 4 & -1 & -1 & -1 & 3 & 3 & 3 & 3 & 1 \\
		3 & 3 & -3 & 6 & -3 & 0 & 0 & 0 & 0 & 3 \\
		1 & 1 & 2 & 2 & 2 & 3 & 3 & -6 & 3 & -2 \\
		1 & 1 & 2 & 2 & 2 & 3 & 3 & 3 & -6 & -2 \\
		4 & -5 & -1 & -1 & -1 & 3 & 3 & 3 & 3 & 1 \\
		3 & 3 & -3 & -3 & 6 & 0 & 0 & 0 & 0 & 3 \\
\end{bmatrix}}
\end{equation*}
such that both $Q_1^\mathsf{T} A Q_1$ and $Q_2^\mathsf{T} A Q_2$ are adjacency matrices. Thus, $G$ has two generalized cospectral mates.

Recently, as a significant improvement of the criterion for a graph to have at most one cospectral mate established in \cite{wang2023ejc}, Raza et al. \cite{raza2026} obtained the following upper bound on the number of generalized cospectral mates of a graph.
\begin{theorem} [\cite{raza2026}]\label{ngm}
	Let $G$ be a controllable graph and $d_1,\ldots, d_n$ be the invariant factors of the walk matrix $W(G)$. Suppose $d_{\lceil\frac{n}{2}\rceil}=1$ and $d_{n-1}=2$. Let $d_n=\prod_{i=1}^k p_i^{m_i}$ be the prime factorization of $d_n$. Then $G$ has at most $\left(\prod_{i=1}^{k}m_i\right)-1$ generalized cospectral mates.
	\end{theorem}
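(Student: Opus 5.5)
This theorem is quoted from Raza et al.\ \cite{raza2026}, so the intended argument is theirs. I would attempt it by reducing the count of mates to a count of admissible levels, and then bounding the number of matrices of each level.

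\textbf{Step 1 (which levels can occur).} Take any $Q\in\mathcal{Q}(G)$. Since $Q^\mathsf{T}=W(H)W^{-1}$, we have $\ell(Q)\mid d_n$. The hypothesis $d_{\lceil n/2\rceil}=1$ is the standard condition under which the $2$-part of $\ell(Q)$ is trivial, as in Proposition \ref{evencase}. With $d_{n-1}=2$, this gives $\ell(Q)\mid d_n/2^{v_2(d_n)}$, so only the odd primes $p_i$ matter. For each odd $p_i$, $d_{n-1}=2$ gives $\rank_{p_i}W=n-1$. Theorem \ref{main} then gives $v_{p_i}(\ell(Q))\le m_i/2$, though a cruder bound $v_{p_i}(\ell(Q))\le m_i-1$, from Proposition \ref{qiuodd}, already suffices for the count. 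So the $p_i$-part of $\ell(Q)$ takes at most $m_i$ values, namely $0,\dots,m_i-1$, and $\ell(Q)$ ranges over at most $\prod m_i$ values. (If $p_1=2$, its factor is handled by $\ell(Q)$ being odd: the single value $v_2(\ell(Q))=0$ is well within $m_1$ choices.)

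\textbf{Step 2 (injectivity: one mate per level).} I would show that two matrices $Q,Q'\in\mathcal{Q}(G)$ of the same level give isomorphic mates, i.e.\ $Q'=QP$ for a permutation matrix $P$. Lemma \ref{fourcong} shows that for each $p^j\|\ell(Q)$, every column $z$ of $\ell(Q)\,Q$ satisfies $W^\mathsf{T} z\equiv 0\pmod{p^{j}}$. Since $\rank_p W=n-1$, the solution module mod $p^{j}$ is free of rank one, generated by some $z_p$. Hence every column of $\ell Q$ reduced mod $p^j$ is a scalar multiple of a fixed vector. Combining over all primes by CRT, the matrix $\ell Q$ is determined mod $\ell$, up to column scalars, by the level alone. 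So $\ell Q$ and $\ell Q'$ agree mod $\ell$ up to column scaling.

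From this I would deduce that $Q^\mathsf{T} Q'$ is a rational orthogonal regular matrix of level $1$ with entries in $\mathbb{Z}$, hence a signed permutation matrix that is regular, hence a permutation. The key estimate is that $(\ell Q)^\mathsf{T}(\ell Q')\equiv 0 \pmod{\ell^2}$. This should follow from the self-orthogonality $z_0^\mathsf{T} z_0\equiv0\pmod{p^{2j}}$ in Lemma \ref{fourcong}, together with the column-by-column proportionality mod $p^j$.

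\textbf{Step 3 (counting).} The level $1$ corresponds to $G$ itself. So the number of non-isomorphic mates is at most $\prod m_i-1$.

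The main obstacle is Step 2. Proportionality mod $p^j$ only gives $(\ell Q)^\mathsf{T}\ell Q'\equiv 0$ modulo roughly $p^{2j}$ if we also control the cross terms. To handle them, I would first try to lift the generator $z_p$ mod $p^{2j}$ using the module-theoretic lifting from the proof of Theorem \ref{main}, namely Lemma \ref{lc}, Lemma \ref{ez} and the vector $z_0-p^\tau y-gp^\tau z_1$. Failing that, I would fall back on Raza et al.'s original counting.
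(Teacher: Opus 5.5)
Your three steps (bound the possible levels, show at most one matrix per level up to column permutation, discard level $1$) are exactly the paper's account of Theorem~\ref{ngm}. There the level bound comes from Qiu et al.\ and the uniqueness per level is cited from Raza et al.; the paper does not prove it.

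\textbf{The gap is at the prime $2$.} Your claim that $\ell(Q)$ is odd does not follow from the hypotheses. Proposition~\ref{evencase} needs $2^{-\lfloor n/2\rfloor}\det W$ odd. Given $d_{\lceil n/2\rceil}=1$ and $d_{n-1}=2$, this means $m_1=v_2(d_n)=1$. For $m_1\ge 2$ the only available information is Qiu et al.'s bound $v_2(\ell(Q))\le m_1-1$, which is why the paper's refined count keeps the factor $m_1$.

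With that bound your count of at most $\prod m_i$ levels survives. But Step~2 must then handle levels with a nontrivial $2$-part, and your mechanism does not reach them. Here $\rank_2 W=\lceil n/2\rceil$, so the solutions of $W^\mathsf{T}z\equiv 0\pmod{2^j}$ do not form a rank-one module, and nothing forces the columns of $\ell Q$ to be proportional modulo $2^j$. For even levels you would still have to import Raza et al.'s result. As written, your argument covers at most the case $m_1=1$.

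\textbf{For odd $p$ the obstacle you flag dissolves.} The detour through Lemmas~\ref{lc} and~\ref{ez} is misdirected. Every column $x$ of $\ell Q$ has $e^\mathsf{T}x=\ell$, so $W^\mathsf{T}x\not\equiv 0\pmod{p^{j+1}}$. Hence $x$ cannot be proportional modulo $p^{2j}$ to any lifted kernel vector. What controls the cross terms is the self-orthogonality of $\hat Q'$ as well as that of $\hat Q$.

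Take a column $z$ of $\hat Q=\ell Q$ with $z\not\equiv 0\pmod p$. It generates the rank-one module of solutions mod $p^j$, so you can write $\hat Q=za^\mathsf{T}+p^jU$ and $\hat Q'=zb^\mathsf{T}+p^jV$. Here $U,V$ are integral and $a,b\not\equiv 0\pmod p$; for $b$ this follows from minimality of $\ell(Q')=\ell$.

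Since $z^\mathsf{T}z\equiv 0\pmod{p^{2j}}$, the identities $\hat Q^\mathsf{T}\hat Q=\hat Q'^\mathsf{T}\hat Q'=\ell^2I$ reduce to
\[
ac^\mathsf{T}+ca^\mathsf{T}\equiv bd^\mathsf{T}+db^\mathsf{T}\equiv 0\pmod{p^j},\qquad c=U^\mathsf{T}z,\ d=V^\mathsf{T}z.
\]
Choose $i$ with $a_i$ a unit. The $(i,i)$ entry $2a_ic_i$ forces $c_i\equiv 0$; this uses $p$ odd, another point where $p=2$ fails. The rest of row $i$, namely $a_ic_k+c_ia_k$, then gives $c\equiv 0\pmod{p^j}$, and likewise $d\equiv 0$. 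Therefore
\[
\hat Q^\mathsf{T}\hat Q'=a(z^\mathsf{T}z)b^\mathsf{T}+p^j(ad^\mathsf{T}+cb^\mathsf{T})+p^{2j}U^\mathsf{T}V\equiv 0\pmod{p^{2j}}.
\]
By CRT, $Q^\mathsf{T}Q'$ is integral whenever $\ell$ is odd, and your conclusion that it is a permutation matrix follows.

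So for odd levels your route gives a self-contained proof of what the paper takes from Raza et al. Combined with Proposition~\ref{evencase}, this proves the theorem when $m_1=1$. Even levels remain the missing piece.
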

Using the result of Qiu et al. \cite{qiu2023}, it is known that, for graphs $G$ as described in Theorem \ref{ngm},  the number $L(G)$ must be a factor of $\prod_{i=1}^k p_i^{m_i-1}$. Thus, $\ell(Q)$ can take at most  $\prod_{i=1}^k m_i$ different values (including 1) when $Q$ runs through $\mathcal{Q}(G)$. A key contribution of Raza et al.~\cite{raza2026} is to show that, under the assumptions of Theorem \ref{ngm}, there exists at most one matrix $Q$ in $\mathcal{Q}(G)$ for each possible level $\ell$ (in the sense of column permutations).  Noting that the matrix $Q\in \mathcal{Q}(G)$ with level $\ell=1$ is the permutation matrix, the number of generalized cospectral mates of $G$ is upper bounded by the number of possible levels, excluding 1. Thus, Theorem \ref{ngm} holds.

Using Theorem \ref{main} for odd primes, we can obtain a better bound on $L(G)$. Indeed, let $d_n=2^{m_1}\prod_{i=2}^kp_i^{m_i}$ be the factorization of $d_n$, where $p_2,\ldots,p_k$ are distinct odd primes. Then for each odd prime $p_i$, we have $v_{p_i}(L(G)) \le \lfloor\frac{1}{2}{m_i}\rfloor$ by Theorem \ref{main}. For $p=2$, we only have $v_2(L(G))\le m_1-1$ by the result of Qiu et al.~\cite{qiu2023}. Thus, the number of possible levels for matrices in $\mathcal{Q}(G)$, other than 1, is at most $m_1\left(\prod_{i=2}^k\left( \lfloor\frac{1}{2}m_i\rfloor+1\right)\right)-1$. This gives an improvement of Theorem \ref{ngm} under the same assumptions. We state it as the following theorem.
\begin{theorem}
	Let $G$ be a controllable graph and $d_1,\ldots, d_n$ be the invariant factors of the walk matrix $W(G)$. Suppose $d_{\lceil\frac{n}{2}\rceil}=1$ and $d_{n-1}=2$. Let $d_n=2^{m_1}\prod_{i=2}^k p_i^{m_i}$ be the prime factorization of $d_n$, where $p_2,\ldots,p_k$ are distinct odd primes. Then $G$ has at most  $m_1\left(\prod_{i=2}^k\left( \lfloor\frac{1}{2}m_i\rfloor+1\right)\right)-1$ generalized cospectral mates.
\end{theorem}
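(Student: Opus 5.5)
The plan is to follow the counting argument sketched just before the statement. The hypotheses match those of Theorem \ref{ngm}, so we may invoke the structural result of Raza et al.~\cite{raza2026}: for each level $\ell$, up to column permutations, $\mathcal{Q}(G)$ contains at most one matrix of level $\ell$. The matrices of level $1$ are the permutation matrices, which correspond to graphs isomorphic to $G$. Hence the number of generalized cospectral mates of $G$ is at most the number of distinct values of $\ell(Q)$ with $Q\in\mathcal{Q}(G)$ and $\ell(Q)>1$. Every such $\ell(Q)$ divides $L(G)$, so it suffices to bound $L(G)$ prime by prime and then count its divisors.

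For the odd primes, first observe that $d_{n-1}=2$ forces $p_i\nmid d_{n-1}$ for each odd $p_i$. Therefore $\rank_{p_i}W=n-1$ and $v_{p_i}(\det W)=v_{p_i}(d_n)=m_i$. Theorem \ref{main} then gives $v_{p_i}(L(G))\le \frac12 m_i$, and since valuations are integers, $v_{p_i}(L(G))\le\lfloor m_i/2\rfloor$. Any prime not dividing $d_n$ does not divide $\det W$ to a power $\ge2$ (because $d_1=\cdots=d_{n-2}=1$ and $d_{n-1}=2$), so Proposition \ref{oddcase} excludes it from $L(G)$.

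For $p=2$, the result of Qiu et al.~\cite{qiu2023} under these hypotheses gives $v_2(L(G))\le m_1-1$, as recalled in the discussion above. This leaves $m_1$ possible exponents, namely $0,\dots,m_1-1$. If $m_1=0$ the bound degenerates; the hypothesis $d_{n-1}=2$ together with $d_{n-1}\mid d_n$ gives $m_1\ge1$, so this case does not arise.

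Multiplying the independent choices of exponents, $\ell(Q)$ takes at most $m_1\prod_{i=2}^k(\lfloor m_i/2\rfloor+1)$ values. Removing the value $1$ yields the stated bound. No step is a genuine obstacle, because the real work lies in Theorem \ref{main} and in the uniqueness-per-level result of \cite{raza2026}. The only points needing care are checking that the rank hypothesis of Theorem \ref{main} follows from $d_{n-1}=2$, and that the factor-$2$ bound from \cite{qiu2023} applies under exactly these invariant-factor assumptions.
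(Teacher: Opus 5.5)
Your proposal is correct and follows the paper's argument exactly. Raza et al.'s at-most-one-matrix-per-level result reduces the count to the number of divisors $>1$ of $L(G)$, which you bound using $v_{p_i}(L(G))\le\lfloor m_i/2\rfloor$ from Theorem~\ref{main} (with the rank hypothesis correctly deduced from $d_{n-1}=2$) and $v_2(L(G))\le m_1-1$ from Qiu et al.

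There is one harmless slip. The hypotheses give only $d_j\in\{1,2\}$ for $j\le n-1$, not $d_1=\cdots=d_{n-2}=1$. That weaker fact still shows that an odd prime not dividing $d_n$ does not divide $\det W$ at all, so your appeal to Proposition~\ref{oddcase} stands.
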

We end this paper by proposing a conjecture that the technical assumption on $\rank_p W(G)$ in Theorem \ref{main} is unnecessary for the conclusion. Moreover, the upper bound in terms of the determinant may be refined using the last two invariant factors.
 \begin{conjecture}
 	Let $G$ be a controllable graph and $p$ be an odd prime factor of $\det W(G)$. Let $d_{n-1}$ and $d_n$ be the last two invariant factors of $W(G)$. Then $v_p(L(G))\le \frac{1}{2}( v_p(d_{n})+v_p(d_{n-1}))$, and in particular,  $v_p(L(G))\le \frac{1}{2} v_p(\det W(G))$.
 \end{conjecture}

\section*{Acknowledgments}
This work is supported by the National Natural Science Foundation of China (Grant Nos. 12001006 and 12301042) and  Wuhu Science and Technology Project, China (Grant No. 2024kj015).


\begin{thebibliography}{99}
	
	\bibitem{godsil} 
	C.~D.~Godsil, Controllable subsets in graphs, Ann. Combin. 16 (2012) 733--744.
	
	
	\bibitem{gouvea} F.~Q.~Gouv\^{e}a, $p$-adic Numbers, 3rd ed., Springer, 2020.
	
	
	\bibitem{johnson} 
	C.~R.~Johnson, M.~Newman, A note on cospectral graphs, J. Combin. Theory, Ser. B 28 (1980) 96--103.
	
	
	\bibitem{mcdonald}
	B.~R.~McDonald, Linear Algebra Over Commutative Rings, Marcel Dekker, New York, 1984.

	\bibitem{qiu2023} 
	L.~Qiu, W.~Wang, W.~Wang, H.~Zhang, Smith Normal Form and the generalized spectral characterization of graphs, Discrete Math. 346 (2023) 113177.
	
	\bibitem{raza2026}
	M.~Raza, O.~U.~Ahmad, M.~Shabbir, W.~Abbas, On the enumeration of generalized cospectral mates of graphs, arXiv:2601.07373v1.
	
	\bibitem{stanley2016JCTA} 
	R. P. Stanley, Smith normal form in combinatorics, J. Combin. Theory, Ser. A 144 (2016) 476--495.
	

	\bibitem{wang2006} 
	W. Wang, C.-X. Xu, A sufficient condition for a family of graphs being determined by their generalized spectra, European J. Combin. 27 (2006) 826--840.
	
	\bibitem{wang2013EJC} 
	W. Wang, Generalized spectral characterization revisited, Electron. J. Combin. 20 (4) (2013) \#P4.
	
	\bibitem{wang2017JCTB} 
	W. Wang, A simple arithmetic criterion for graphs being determined by their generalized spectra, J. Combin. Theory, Ser. B 122 (2017) 438--451.
	
	\bibitem{wang2020} W.~Wang, L.~Qiu, J.~Qian, W.~Wang, Generalized spectral
	characterization of mixed graphs, Electron. J. Combin. 27(4) (2020)   \#P4.55.
	
	\bibitem{wang2025}
	W.~Wang, W.~Wang, Haemers' Conjecture: An algorithmic perspective, Exp. Math. 34 (2) (2025) 147--161.
	
	\bibitem{wang2023ejc} 
	W.~Wang, W.~Wang, T.~Yu, Graphs with at most one generalized cospectral mate, Electron. J. Combin. 30 (1) (2023) \#P1.38.
	
\end{thebibliography}
\end{document}